\numberwithin{equation}{section}
\newcommand{\nchi}{{\raise.3ex\hbox{\(\chi\)}}}
\newcommand{\N}{\mathbb{N}}
\newcommand{\R}{\mathbb{R}}
\newcommand{\B}{\mathbb{B}}
\newcommand{\D}{{\rm D}}
\newcommand{\sfd}{{\sf d}}
\renewcommand{\d}{{\mathrm d}}
\newcommand{\e}{{\rm e}}
\newcommand{\X}{{\rm X}}
\newcommand{\Y}{{\rm Y}}
\newcommand{\1}{\mathbbm 1}
\newcommand{\LIP}{{\rm LIP}}
\newcommand{\lip}{{\rm lip}}
\newcommand{\ppi}{{\mbox{\boldmath\(\pi\)}}}
\newcommand{\sppi}{{\mbox{\scriptsize\boldmath\(\pi\)}}}
\renewcommand{\div}{{\rm div}}
\newcommand{\Der}{{\rm Der}}
\newcommand{\fr}{\penalty-20\null\hfill\(\blacksquare\)}
\newtheorem{theorem}{Theorem}[section]
\newtheorem{corollary}[theorem]{Corollary}
\newtheorem{lemma}[theorem]{Lemma}
\newtheorem{proposition}[theorem]{Proposition}
\newtheorem{definition}[theorem]{Definition}
\newtheorem{remark}[theorem]{Remark}
\title{Vector calculus on weighted reflexive Banach spaces}
\author{Enrico Pasqualetto\footnote{\href{mailto:enrico.e.pasqualetto@jyu.fi}{enrico.e.pasqualetto@jyu.fi},
Department of Mathematics and Statistics, P.O.\ Box 35 (MaD), FI-40014 University of Jyv\"askyl\"a, Finland.}
\;and Tapio Rajala\footnote{\href{mailto:tapio.m.rajala@jyu.fi}{tapio.m.rajala@jyu.fi},
Department of Mathematics and Statistics, P.O.\ Box 35 (MaD), FI-40014 University of Jyv\"askyl\"a, Finland.}}
\begin{document}

\date{\today}
\maketitle
\begin{abstract}
We study first-order Sobolev spaces on reflexive Banach spaces via relaxation, test plans, and divergence.
We show the equivalence of the different approaches to the Sobolev spaces and to the related tangent bundles.
\end{abstract}

\noindent\textbf{MSC(2020).} 53C23, 46E35, 18F15, 49J52\\
\textbf{Keywords.} Sobolev space, weighted Banach space, tangent bundle, test plan
\section{Introduction}
During the past couple of decades, Sobolev spaces \(W^{1,p}(\X,\mu)\) have been extensively studied for metric measure spaces \((\X,\sfd,\mu)\).
In spaces satisfying a local Poincar\'e inequality and measure doubling, called PI-spaces, the strong density of Lipschitz functions \cite{Cheeger00}
implies quite straightforwardly the equivalence of different approaches to Sobolev spaces. In \cite{AmbrosioGigliSavare11-3}, it was noticed that the
density in energy of Lipschitz functions, valid without the PI-assumption, is enough for showing the equivalence of notions of upper gradients. 
In general metric measure spaces one can also introduce an abstract first-order differentiable structure \cite{Gigli14}. With more assumptions on the space,
the structure is given via Lipschitz charts \cite{Cheeger00} (see also \cite{EBS2023}). 

If the underlying metric structure is linear, a natural question is to ask if we can connect the abstract differentiable structures with the linear one. 
In Euclidean spaces with arbitrary reference measure this was addressed in \cite{Louet14} and \cite{LPR21} (see also \cite{GL22} for the BV case) by starting
from the notions of Sobolev space given in \cite{BouchitteButtazzoSeppecher1997} and \cite{Zhi00}. Although weighted Euclidean spaces need not be PI-spaces,
smooth functions are still strongly dense in the Sobolev space defined on them \cite{GP16-2}.

In the current work we consider the infinite-dimensional linear case of reflexive separable Banach spaces (and, in some results, the larger class of separable Banach spaces
having the Radon--Nikod\'{y}m property). In this context, it is known from \cite{Sav19} (see also \cite{FornasierSavareSodini22,Sodini22}) that suitable
algebras \(\mathscr A\) of smooth functions on \(\B\) are dense in energy in the Sobolev space \(W^{1,p}(\B,\mu)\). One advantage of working with smooth
functions is that they have an everywhere-defined differential, which allows to transfer valuable information from the ambient space to the Sobolev space with respect to arbitrary reference measures.
Notice that even though our results are stated in the infinite-dimensional case, they are new already on Euclidean spaces since we allow the exponent $p \ne 2$
in $W^{1,p}$. Indeed, only the case $p=2$ was handled in \cite{LPR21} since there the arguments relied heavily on the Hilbertian structure of the Sobolev space $W^{1,2}$.
Similarly, it would be simpler to prove versions of our results in cases where one could use the Hilbertian structure, such as for $W^{1,2}$ on infinite-dimensional Hilbert spaces,
or for the cases where one can renorm the Sobolev space to be Hilbert (compare to \cite{EBRS2022b}).

The paper contains three main results, which are valid for weighted Banach spaces (as in Definition \ref{def:weighted_Banach}) that are reflexive
(or some weaker assumptions, as the Radon--Nikod\'{y}m property):
\begin{itemize}
\item In Theorem \ref{thm:main_Sob} we prove that the metric notion of Sobolev space via upper gradients (see Definition \ref{def:Sobolev_space}) coincides
with other two approaches (in terms of vector fields having distributional divergence, and via weak differentials) that are tailored to the Banach setting.
\item In Theorem \ref{thm:main_mwug} we show that also the minimal weak upper gradients corresponding to the three above approaches do coincide.
\item Theorem \ref{thm:main_tg_bundle} connects different approaches to identifying the directions in the Banach space that are analytically relevant for \(W^{1,p}(\B,\mu)\).
The first approach is via divergence inspired by Bouchitt\'e--Buttazzo--Seppecher \cite{BouchitteButtazzoSeppecher1997} while the other two are via test plans. This result
partly generalises the Euclidean one \cite[Theorem 3.16]{LPR21}.
\end{itemize}
\subsection*{List of symbols}
Below, we provide a list of the non-standard symbols we will use in the paper.
\begin{center}
\begin{spacing}{1.2}
\begin{longtable}{p{2.1cm} p{11.8cm}}
\(\mathcal L_1\) & Lebesgue measure restricted to \([0,1]\); see \eqref{eq:def_restr_Leb}.\\
\((\B,\mu)\) & a weighted Banach space; see Definition \ref{def:weighted_Banach}.\\
\(\mathfrak C(\B)\) & shorthand notation for \(C([0,1];\B)\times[0,1]\); see \eqref{eq:frak_C(X)}.\\
\(\e\) & the evaluation map \(\e\colon\mathfrak C(\X)\to\X\), given by \(\e(\gamma,t)=\e_t(\gamma)\coloneqq\gamma_t\).\\
\(\Der\) & derivative map; see \eqref{eq:def_Der}.\\
\(|\d f|_{\B^*}\) & the function \(\B\ni x\mapsto\|\d_x f\|_{\B^*}\in\R\) for \(f\in C^1(\B)\); see \eqref{eq:def_ptwse_norm_Fr_diff}.\\
\({\rm Comp}(\ppi)\) & compression constant of a \(q\)-test plan \(\ppi\); see Definition \ref{def:test_plan}.\\
\(\Pi_q(\B,\mu)\) & \(q\)-test plans on the weighted Banach space \((\B,\mu)\); see Definition \ref{def:test_plan}.\\
\(\hat\ppi\) & shorthand notation for \(\hat\ppi\coloneqq\ppi\otimes\mathcal L_1\); see \eqref{eq:aux_def_hat_ppi}.\\
\(\{\hat\ppi_x\}_{x\in\B}\) & conditional probabilities of the disintegration \(\hat\ppi=\int\hat\ppi_x\,\d(\e_\#\hat\ppi)(x)\).\\
\(W^{1,p}(\B,\mu)\) & metric \(p\)-Sobolev space on a weighted Banach space \((\B,\mu)\); see Definition \ref{def:Sobolev_space}.\\
\(|\D_\mu f|\) & the minimal \(p\)-weak upper gradient of \(f\in W^{1,p}(\B,\mu)\); see Definition \ref{def:Sobolev_space}.\\
\({\rm S}_\sppi\) & the `support' of a \(q\)-test plan \(\ppi\); see \eqref{eq:def_S_ppi}.\\
\(D_q(\div_\mu)\) & domain of the distributional divergence; see Definition \ref{def:distr_div}.\\
\(\div_\mu\) & the distributional divergence of \(v\in D_q(\div_\mu)\); see Definition \ref{def:distr_div}.\\
\(\mathscr D_\mu(\B)\) & space of \(\mu\)-a.e.\ defined measurable \(\B\)-bundles; see Definition \ref{def:bundle}.\\
\(\Gamma_q({\bf E})\) & \(q\)-section space of a measurable \(\B\)-bundle \({\bf E}\in\mathscr D_\mu(\B)\); see \eqref{eq:def_sect_bundle}.\\
\(\preceq\) & the natural partial order on \(\mathscr D_\mu(\B)\).\\
\(T_\mu\B\) & \(q\)-tangent bundle of \((\B,\mu)\); see Definition \ref{def:tg_bundle}.\\
\(\Gamma_p({\bf E}_{w^*}^*)\) & space of weakly\(^*\) measurable sections of the dual of \(\bf E\).\\
\(|\omega|_{{\bf E}^*}\) & \(\mu\)-a.e.\ equivalence class of \(\X\ni x\mapsto\|\omega(x)\|_{{\bf E}(x)^*}\in\R\) for \(\omega\in\Gamma_p({\bf E}_{w^*}^*)\).\\
\(L^p_{w^*}(\X,\mu;\B^*)\) & the dual space of \(L^q(\X,\mu;\B)\).\\
\(\omega_{|{\bf E}}\) & the restriction of \(\omega\in\Gamma_p({\bf E}_{w^*}^*)\) to a bundle \({\bf E}\in\mathscr D_\mu(\B)\); see Definition \ref{def:restr_Fr_diff}.\\
\(\d_{|{\bf E}}\) & the restricted Fr\'{e}chet differential \(C^1(\B)\cap\LIP_b(\B)\ni f\mapsto\d f_{|{\bf E}}\); see \eqref{eq:restr_diff}.\\
\([\omega]_{{\bf E}^*}\) & the pointwise seminorm of \(\omega\in L^p(\X,\mu;\B^*)\) induced by \({\bf E}\in\mathscr D_\mu(\B)\); see \eqref{eq:ptwse_seminorm}.\\
\(\pi_{\bf E}\) & the `restriction' operator \(L^p(\X,\mu;\B^*)\ni\omega\mapsto\omega_{|{\bf E}}\in\Gamma_p({\bf E}_{w^*}^*)\); see \eqref{eq:restr_op}.\\
\(\varphi^*{\bf E}\) & the pullback bundle; see \eqref{eq:def_pullback_bundle}.\\
\(L_f\) & the functional \(L_f\in D_q(\div_\mu)^*\) associated to \(f\in W^{1,p}(\B,\mu)\); see \eqref{eq:char_L_f_cl1}.\\
\({\rm WD}_\mu^p(f;\mathscr A)\) & \((\mathscr A,p)\)-weak differentials of \(f\in W^{1,p}(\B,\mu)\); see Theorem \ref{thm:main_Sob} c).\\
\(\d_\mu\) & the \(p\)-weak differential operator \(\d_\mu\colon W^{1,p}(\B,\mu)\to\Gamma_q(T_\mu\B)^*\); see \eqref{eq:def_d_mu}.\\
\(v_\sppi\) & vector field induced by a \(q\)-test plan \(\ppi\); see Lemma \ref{lem:vf_induced_tp}.\\
\(S_\sppi\B\) & multivalued map sending \(x\in\B\) to the support of \(\Der_\#\hat\ppi_x\); see \eqref{eq:char_master_tp_aux}.\\
\(V_\sppi\B\) & the bundle obtained as the closure of the span of \(S_\sppi\B\); see Lemma \ref{lem:char_master_tp_aux}.\\
\(\Der_\sppi\) & equivalence class of \(\Der\) in \(L^q(\mathfrak C(\B),\hat\ppi;\B)\); see the proof of Theorem \ref{thm:main_tg_bundle}.\\
\(\ppi'\) & velocity field of a \(q\)-test plan \(\ppi\); see Remark \ref{rmk:vel_tp}.
\end{longtable}
\end{spacing}
\end{center}
\vspace{-1.2cm}
\paragraph{Acknowledgements.} 
The authors thank the anonymous referee, whose comments and suggestions led to a significant improvement of the presentation.
The first named author has been supported by the MIUR-PRIN 202244A7YL project ``Gradient Flows and Non-Smooth Geometric Structures with Applications to Optimization and Machine Learning''.
\section{Preliminaries}\label{s:preli}
Throughout the whole paper, whenever an exponent \(p\in(1,\infty)\) is given, we tacitly denote by
\[
q\coloneqq\frac{p}{p-1}\in(1,\infty)
\]
its conjugate exponent, and vice versa. Moreover, letting \(\mathcal L^1\) be the Lebesgue measure, we shorten
\begin{equation}\label{eq:def_restr_Leb}
\mathcal L_1\coloneqq\mathcal L^1|_{[0,1]}.
\end{equation}
In this paper, we focus on the family of \emph{weighted Banach spaces}, which we define as follows:
\begin{definition}[Weighted Banach space]\label{def:weighted_Banach}
We say that a couple \((\B,\mu)\) is a \textbf{weighted Banach space} if \(\B\) is a separable Banach space and \(\mu\geq 0\)
is a finite Borel measure on \(\B\).
\end{definition}

We underline that in the above definition we assume that \(\B\) is \underline{separable} and \(\mu\) is \underline{finite}.
\subsection{Classical notions on Banach spaces}
Given a normed space \(\mathbb V\), we denote by \(\mathbb V^*\) its dual Banach space. If \(\B\) is the Banach space obtained as the completion of \(\mathbb V\),
then we have that \(\B^*\cong\mathbb V^*\), meaning that \(\B^*\) can be canonically identified with \(\mathbb V^*\). The duality pairing between \(\omega\in\mathbb V^*\)
and \(v\in\mathbb V\) will be denoted by \(\langle\omega,v\rangle\in\R\).
\subsubsection*{Absolutely continuous curves}
Let \(\B\) be a separable Banach space. We denote by \(C([0,1];\B)\) the space of all continuous curves \(\gamma\colon[0,1]\to\B\).
It is a complete and separable metric space if endowed with the following distance:
\[
\sfd_{C([0,1];\B)}(\gamma,\sigma)\coloneqq\max_{t\in[0,1]}\|\gamma_t-\sigma_t\|_\B\quad\text{ for every }\gamma,\sigma\in C([0,1];\B).
\]
We then define the complete and separable metric space \((\mathfrak C(\B),\sfd_{\mathfrak C(\B)})\) as
\begin{equation}\label{eq:frak_C(X)}
\mathfrak C(\B)\coloneqq C([0,1];\B)\times[0,1]
\end{equation}
together with the distance
\[
\sfd_{\mathfrak C(\B)}((\gamma,t),(\sigma,s))\coloneqq\sfd_{C([0,1];\B)}(\gamma,\sigma)+|t-s|\quad\text{ for every }(\gamma,t),(\sigma,s)\in\mathfrak C(\B).
\]

The \textbf{evaluation map} \(\e\colon\mathfrak C(\B)\to\B\) is defined as \(\e(\gamma,t)=\e_t(\gamma)\coloneqq\gamma_t\)
for every \((\gamma,t)\in\mathfrak C(\B)\). Notice that \(\e\colon\mathfrak C(\B)\to\B\) is continuous and \(\e_t\colon C([0,1];\B)\to\B\) is \(1\)-Lipschitz
for every \(t\in[0,1]\). A curve \(\gamma\in C([0,1];\B)\) is said to be \textbf{\(q\)-absolutely continuous} (for some \(q\in[1,\infty]\)) if there exists
a function \(g\in L^q(0,1)\) such that \(g\geq 0\) and
\begin{equation}\label{eq:def_AC}
\|\gamma_t-\gamma_s\|_\B\leq\int_s^t g(r)\,\d r\quad\text{ for every }0\leq s<t\leq 1.
\end{equation}
In the case where \(q=1\), we write `absolutely continuous' instead of `\(1\)-absolutely continuous'.
We denote by \(AC^q([0,1];\B)\) the family of all \(q\)-absolutely continuous curves in \(\B\).
It holds that \(AC^q([0,1];\B)\) is a Borel subset of \(C([0,1];\B)\). We write \(AC([0,1];\B)\) instead of \(AC^1([0,1];\B)\).
\subsubsection*{Radon--Nikod\'{y}m property and Asplund spaces}
A Banach space \(\B\) is said to have the \textbf{Radon--Nikod\'{y}m property} provided every absolutely continuous curve
\(\gamma\colon[0,1]\to\B\) is \(\mathcal L_1\)-a.e.\ differentiable, which means that
\[
\dot\gamma_t\coloneqq\lim_{h\to 0}\frac{\gamma_{t+h}-\gamma_t}{h}\in\B\quad\text{ exists for }\mathcal L_1\text{-a.e.\ }t\in[0,1].
\]
By an \textbf{Asplund space} we mean a Banach space \(\B\) whose dual \(\B^*\) has
the Radon--Nikod\'{y}m property. Recall that every reflexive Banach space is Asplund and has the Radon--Nikod\'{y}m property.
The converse can fail: there exist (separable) Asplund spaces having the Radon--Nikod\'{y}m property that are not reflexive, e.g.\ James' space \cite{James51}.
See \cite{FHHMZ10} for a thorough treatment of these topics.
\medskip

Given a Banach space \(\B\) having the Radon--Nikod\'{y}m property, we define \(\Der\colon\mathfrak C(\B)\to\B\) as
\begin{equation}\label{eq:def_Der}
\Der(\gamma,t)\coloneqq\left\{\begin{array}{ll}
\dot\gamma_t\\
0_\B
\end{array}\quad\begin{array}{ll}
\text{ if }\gamma\in AC([0,1];\B)\text{ and }\dot\gamma_t\text{ exists,}\\
\text{otherwise.}
\end{array}\right.
\end{equation}
Then \(\Der\colon\mathfrak C(\B)\to\B\) is a Borel map. For any \(\gamma\in AC([0,1];\B)\) the function
\(\|\Der(\gamma,\cdot)\|_\B\), which is called the \textbf{metric speed} of \(\gamma\), is the \(\mathcal L_1\)-a.e.\ minimal
\(g\in L^1(0,1)\) with \(g\geq 0\) satisfying \eqref{eq:def_AC}.
\subsubsection*{Fr\'{e}chet differential and smooth functions}
Given a Banach space \(\B\) and a function \(f\colon\B\to\R\), we say that \(f\) is \textbf{Fr\'{e}chet differentiable} at \(x\in\B\)
if there exists an element \(\d_x f\in\B^*\), called the \textbf{Fr\'{e}chet differential} of \(f\) at \(x\), such that
\begin{equation}\label{eq:def_Frechet_diff}
\lim_{\B\ni v\to 0}\frac{|f(x+v)-f(x)-\langle\d_x f,v\rangle|}{\|v\|_\B}=0.
\end{equation}
Notice that \eqref{eq:def_Frechet_diff} determines uniquely \(\d_x f\) and implies that \(f\) is continuous at \(x\). Moreover, we say that \(f\) is \textbf{of class \(C^1\)}
if it is Fr\'{e}chet differentiable at every point of \(\B\) and \(\B\ni x\mapsto\d_x f\in\B^*\) is continuous. We denote by \(C^1(\B)\) the space of real-valued
functions of class \(C^1\) defined on \(\B\). Given any \(f\in C^1(\B)\), we define the Borel function \(|\d f|_{\B^*}\colon\B\to[0,+\infty)\) as
\begin{equation}\label{eq:def_ptwse_norm_Fr_diff}
|\d f|_{\B^*}(x)\coloneqq\|\d_x f\|_{\B^*}\quad\text{ for every }x\in\B.
\end{equation}
One can readily check that the function \(f\) is locally Lipschitz and it holds that \(|\d f|_{\B^*}=\lip(f)\), where the \textbf{slope} \(\lip(f)\colon\B\to[0,+\infty)\)
is defined as \(\lip(f)(x)\coloneqq 0\) if \(x\in\B\) is an isolated point and
\[
\lip(f)(x)\coloneqq\limsup_{\B\ni y\to x}\frac{|f(x)-f(y)|}{\|x-y\|_\B}\quad\text{ if }x\in\B\text{ is an accumulation point.}
\]
\subsubsection*{Lebesgue--Bochner spaces}
Given a finite measure space \((\X,\Sigma,\mu)\) and an exponent \(p\in[1,\infty]\), we denote by \((L^p(\mu),\|\cdot\|_{L^p(\mu)})\)
the \textbf{Lebesgue space} of exponent \(p\). Recall that \(L^p(\mu)\) is a Riesz space if endowed with the natural partial
order relation: given any two functions \(f,g\in L^p(\mu)\), we declare that
\(f\leq g\) if and only if \(f(x)\leq g(x)\) holds for \(\mu\)-a.e.\ \(x\in\X\). Recall that the Riesz space \(L^p(\mu)\)
is Dedekind complete, which means that every non-empty subset of \(L^p(\mu)\) that is bounded above has a supremum. Namely,
given a set \(\{f_i\}_{i\in I}\subseteq L^p(\mu)\) and \(g\in L^p(\mu)\) such that \(f_i\leq g\) for every \(i\in I\),
then the \textbf{supremum}
\[
f\coloneqq\bigvee_{i\in I}f_i\in L^p(\mu)
\]
exists. This means that \(f\geq f_i\) for every \(i\in I\), and that \(f\leq\tilde f\) whenever
\(\tilde f\in L^p(\mu)\) satisfies \(\tilde f\geq f_i\) for every \(i\in I\). In a similar way, one can define the
\textbf{infimum} \(\bigwedge_{i\in I}f_i\in L^p(\mu)\). See e.g.\ \cite{Bogachev07}.
\medskip

We assume the reader is familiar with the basics of Bochner integration; we refer to \cite{HNVW16}
and the references therein for a detailed account of this theory. Let us only recall some notation
and results. Given a finite measure space \((\X,\Sigma,\mu)\), a Banach space \(\B\),
and an exponent \(q\in(1,\infty)\), we denote by \(L^q(\X,\mu;\B)\) the \textbf{\(q\)-Lebesgue--Bochner space}
from \((\X,\Sigma,\mu)\) to \(\B\). The following hold:
\begin{itemize}
\item \(L^q(\X,\mu;\B)\) is a Banach space and a module over the commutative ring \(L^\infty(\mu)\).
\item \(L^p(\X,\mu;\B^*)\) is isomorphic to a subspace of \(L^q(\X,\mu;\B)^*\).
\item \(L^q(\X,\mu;\B)^*\cong L^p(\X,\mu;\B^*)\) if and only if \(\B\) is Asplund.
\item \(L^q(\X,\mu;\B)\) is reflexive if and only if \(\B\) is reflexive.
\item \(L^q(\X,\mu;\B)\) is uniformly convex if and only if \(\B\) is uniformly convex.
\end{itemize}
Given any \(v\in L^q(\X,\mu;\B)\), the \(\mu\)-a.e.\ equivalence class \(|v|_\B\) of the function \(\X\ni x\mapsto\|v(x)\|_\B\) belongs to \(L^q(\mu)\).
If \((\X,\Sigma_\X,\mu_\X)\) and \((\Y,\Sigma_\Y,\mu_\Y)\) are finite measure spaces, then each measurable map
\(\varphi\colon\X\to\Y\) satisfying \(\varphi_\#\mu_\X\leq C\mu_\Y\) for some \(C>0\) induces a pullback operator
\begin{equation}\label{eq:pullback_Leb-Boch}
\varphi^*\colon L^q(\Y,\mu_\Y;\B)\to L^q(\X,\mu_\X;\B)
\end{equation}
for every Banach space \(\B\) and \(q\in(1,\infty)\). Namely, we set \(\varphi^*v\coloneqq v\circ\varphi\) for all \(v\in L^q(\Y,\mu_\Y;\B)\).
\medskip

Given a weighted Banach space \((\B,\mu)\) and \(f\in C^1(\B)\cap\LIP(\B)\), the map \(\B\ni x\mapsto\d_x f\in\B^*\) is Bochner integrable
(as it is continuous and bounded), thus we can consider its equivalence class
\[
\d f\in L^p(\B,\mu;\B^*)\quad\text{ for every }p\in(1,\infty).
\]
For any \(v\in L^q(\B,\mu;\B)\), the \(\mu\)-a.e.\ equivalence class \(\d f(v)\) of \(\B\ni x\mapsto\langle\d_x f,v(x)\rangle\in\R\) is in \(L^1(\mu)\).
\subsection{Sobolev calculus on weighted Banach spaces}
\subsubsection*{Test plans}
Following \cite{AmbrosioGigliSavare11}, we give the ensuing definition of a \emph{\(q\)-test plan}
(over a weighted Banach space):
\begin{definition}[Test plan]\label{def:test_plan}
Let \((\B,\mu)\) be a weighted Banach space such that \(\B\) has the Radon--Nikod\'{y}m property and
let \(q\in(1,\infty)\). Then a Borel probability measure \(\ppi\) on \(C([0,1];\B)\) is said to be a
\textbf{\(q\)-test plan} on \((\B,\mu)\) provided the following two requirements are met:
\begin{itemize}
\item[\(\rm i)\)] There exists a constant \(C>0\) such that
\[
(\e_t)_\#\ppi\leq C\mu\quad\text{ for every }t\in[0,1].
\]
The minimal such \(C\) is called the \textbf{compression constant} of \(\ppi\) and denoted by \({\rm Comp}(\ppi)\).
\item[\(\rm ii)\)] The measure \(\ppi\) is concentrated on \(AC^q([0,1];\B)\) and has finite \textbf{kinetic \(q\)-energy}, i.e.
\[
\int\!\!\!\int_0^1\|\dot\gamma_t\|_\B^q\,\d t\,\d\ppi(\gamma)<+\infty.
\]
\end{itemize}
We denote by \(\Pi_q(\B,\mu)\) the family of all \(q\)-test plans on \((\B,\mu)\).
\end{definition}

We introduce the shorthand notation
\begin{equation}\label{eq:aux_def_hat_ppi}
\hat\ppi\coloneqq\ppi\otimes\mathcal L_1\quad\text{ for every }\ppi\in\Pi_q(\B,\mu).
\end{equation}
Observe that \(\e_\#\hat\ppi\leq{\rm Comp}(\ppi)\mu\), so that in particular \(\e_\#\hat\ppi\ll\mu\), for every \(\ppi\in\Pi_q(\B,\mu)\).
We will occasionally consider the \textbf{disintegration} \(\hat\ppi=\int\hat\ppi_x\,\d(\e_\#\hat\ppi)(x)\) of \(\hat\ppi\) along \(\e\), which means that:
\begin{itemize}
\item \(\{\hat\ppi_x\}_{x\in\B}\) are Borel probability measures on \(\mathfrak C(\B)\) such that
\[
\hat\ppi_x(\mathfrak C(\B)\setminus\e^{-1}(x))=0\quad\text{ for }\e_\#\hat\ppi\text{-a.e.\ }x\in\B.
\]
\item \(\B\ni x\mapsto\hat\ppi_x\) is measurable, i.e.\ \(\B\ni x\mapsto\hat\ppi_x(E)\) is Borel for every
\(E\subseteq\mathfrak C(\B)\) Borel.
\item \(\hat\ppi(E)=\int\hat\ppi_x(E)\,\d(\e_\#\hat\ppi)(x)\) for every \(E\subseteq\mathfrak C(\B)\) Borel.
\end{itemize}
The family \(\{\hat\ppi_x\}_{x\in\B}\) is \(\e_\#\hat\ppi\)-a.e.\ unique. For a proof of its existence, see e.g.\ \cite[Theorem 5.3.1]{AmbrosioGigliSavare08}.
\subsubsection*{Compatible algebras}
For any Banach space \(\B\), we call \(\LIP_b(\B)\) the algebra of real-valued bounded Lipschitz functions
on \(\B\). Following \cite[Definition 2.1.17]{Sav19}, we give the ensuing definition of \emph{compatible algebra}:
\begin{definition}[Compatible algebra]
Let \(\B\) be a Banach space. Then we say that a set \(\mathscr A\) is a \textbf{compatible subalgebra}
of \(\LIP_b(\B)\) provided it is a unital subalgebra of \(\LIP_b(\B)\) such that
\[
\|x-y\|_\B=\sup\Big\{|f(x)-f(y)|\;\Big|\;f\in\LIP_b(\B)\text{ is \(1\)-Lipschitz}\Big\}\quad\text{ for every }x,y\in\B.
\]
\end{definition}

Distinguished examples of compatible subalgebras are \(\LIP_b(\B)\) itself, \(C^1(\B)\cap\LIP_b(\B)\),
and the smaller space of \textbf{smooth cylindrical functions} on \(\B\) (defined in \cite[Example 2.1.19]{Sav19}).
A useful fact concerning compatible algebras, which is proved in \cite[Lemma 2.1.27]{Sav19}, states the following:
\begin{lemma}
Let \((\B,\mu)\) be a weighted Banach space. Let \(\mathscr A\) be a compatible subalgebra of \(\LIP_b(\B)\).
Then \(\mathscr A\) is strongly dense in \(L^p(\mu)\) for every \(p\in[1,\infty)\), and is weakly\(^*\) dense
in \(L^\infty(\mu)\).
\end{lemma}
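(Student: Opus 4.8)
The plan is to reduce every assertion to the Stone--Weierstrass theorem on compact sets, and then to pass from compact to global approximation using that every element of \(\mathscr A\) is bounded, together with the tightness of finite Borel measures on the Polish space \(\B\). The one genuine difficulty is the non-compactness of \(\B\): Stone--Weierstrass yields only uniform approximation on compacta, and an approximant left unconstrained off a compact set would destroy both the \(L^p\) tail estimate and the weak\(^*\) argument. The device that resolves this — and which I regard as the crux — is post-composition with a polynomial approximating a truncation, which keeps the approximant inside the algebra \(\mathscr A\) while enforcing a global supremum bound.

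Concretely, the workhorse is the claim that for every \(f\in C_b(\B)\) with \(\|f\|_\infty\le M\), every compact \(K\subseteq\B\) and every \(\eps>0\) there is \(g\in\mathscr A\) with \(\|g\|_\infty\le M+\eps\) and \(\sup_K|f-g|\le\eps\). To prove it I would first use that the compatibility requirement ensures \(\mathscr A\) separates the points of \(\B\) (this is precisely the role of the norm-recovery identity in the definition); hence \(\{h|_K:h\in\mathscr A\}\) is a unital, point-separating subalgebra of \(C(K)\), and Stone--Weierstrass gives \(h\in\mathscr A\) with \(\sup_K|f-h|\le\eps/2\). As \(h\) is bounded, with range in some \([-R,R]\), I would approximate the truncation \(T_M(t)=\max(-M,\min(M,t))\) uniformly on \([-R,R]\) by a polynomial \(P\); since \(\mathscr A\) is a unital algebra, it is stable under post-composition with polynomials, so \(g:=P\circ h\in\mathscr A\). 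Using that \(T_M\) is \(1\)-Lipschitz and that \(T_M\circ f=f\), a sufficiently accurate \(P\) yields both required bounds.

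Granting the claim, the strong density in \(L^p(\mu)\), \(p\in[1,\infty)\), is quick. Bounded Lipschitz functions are strongly dense in \(L^p(\mu)\) for every finite Borel measure on a metric space (approximate indicators of closed sets \(C\) by \(\max(0,1-n\,\sfd(\cdot,C))\) and pass through simple functions), so it suffices to approximate a fixed \(f\in\LIP_b(\B)\) with \(\|f\|_\infty\le M\). Given \(\eps>0\), tightness of \(\mu\) provides a compact \(K\) with \(\mu(\B\setminus K)<\eps\), and the claim yields \(g\in\mathscr A\) with \(\|g\|_\infty\le M+\eps\) and \(\sup_K|f-g|\le\eps\). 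Splitting \(\int|f-g|^p\,\d\mu\) over \(K\) and its complement and using \(|f-g|\le 2M+\eps\) off \(K\), I obtain \(\|f-g\|_{L^p(\mu)}^p\le\eps^p\,\mu(\B)+(2M+\eps)^p\,\eps\), which is arbitrarily small.

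For the weak\(^*\) density in \(L^\infty(\mu)=L^1(\mu)^*\), I would show that the pre-annihilator of the subspace \(\mathscr A\) in \(L^1(\mu)\) is trivial, so that by the bipolar theorem the weak\(^*\) closure of \(\mathscr A\) is all of \(L^\infty(\mu)\). Let then \(h\in L^1(\mu)\) satisfy \(\int fh\,\d\mu=0\) for all \(f\in\mathscr A\), and set \(\nu:=h\,\mu\), a finite signed Borel measure. Applying the claim relative to \(|\nu|\) exactly as in the previous paragraph — the uniform bound \(\|g\|_\infty\le M+\eps\) being precisely what tames the tail over \(\B\setminus K\) — I would upgrade the identity \(\int f\,\d\nu=0\) from \(f\in\mathscr A\) to all \(f\in C_b(\B)\). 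Since a finite signed Borel measure on a metric space is determined by its integrals against bounded continuous functions, this forces \(\nu=0\), i.e.\ \(h=0\) in \(L^1(\mu)\), completing the argument.
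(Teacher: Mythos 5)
Your proof is correct. One point of order first: the paper does not actually prove this lemma --- it quotes it verbatim from Savar\'e (the citation is to \cite[Lemma 2.1.27]{Sav19}) --- so there is no internal proof to compare against; Savar\'e's argument, formulated in the more general setting of extended metric-topological measure spaces, is built from the same conceptual ingredients as yours (point separation from compatibility, Stone--Weierstrass on compacta, tightness of the Radon measure, and uniform truncation to control tails), but your version is a clean, self-contained argument tailored to the Polish setting of the present paper. All three steps check out: (i) in the key claim, the restriction of \(\mathscr A\) to a compact \(K\) is a unital point-separating subalgebra of \(C(K)\), and post-composition of the Stone--Weierstrass approximant \(h\) with a polynomial \(P\) uniformly close to the truncation \(T_M\) on the range of \(h\) stays inside \(\mathscr A\) (unitality gives the constants), yielding both \(\sup_K|f-P\circ h|\le\eps\) and \(\|P\circ h\|_\infty\le M+\eps\); (ii) tightness of \(\mu\) is available because \(\B\) is separable and complete (Ulam's theorem), and the split of \(\int|f-g|^p\,\d\mu\) over \(K\) and its complement, together with the preliminary reduction to \(\LIP_b(\B)\) via closed-regularity of \(\mu\), is fine; (iii) the pre-annihilator/bipolar argument is the standard and correct functional-analytic route to weak\(^*\) density, the measure \(|h|\mu\) is again a finite Borel measure and hence tight, and a finite signed Borel measure on a metric space vanishing against all of \(C_b(\B)\) is indeed zero. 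One caveat deserves emphasis: you read the compatibility condition as saying that the norm is recovered by the sup over \(1\)-Lipschitz functions \emph{belonging to \(\mathscr A\)}. That is Savar\'e's definition and clearly the intended one, but the paper as written takes the sup over all \(1\)-Lipschitz \(f\in\LIP_b(\B)\), which is a typo: under the literal reading the condition is vacuous, the algebra of constant functions would be ``compatible'', and the lemma would be false. Your proof correctly relies on the intended reading, and it is precisely (and only) there that compatibility enters, through point separation.
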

\subsubsection*{Metric Sobolev spaces}
Let us recall the definition of the metric Sobolev space via test plans introduced in \cite{AmbrosioGigliSavare11}:
\begin{definition}[Metric Sobolev space]\label{def:Sobolev_space}
Let \((\B,\mu)\) be a weighted Banach space such that \(\B\) has the Radon--Nikod\'{y}m property and let \(p\in(1,\infty)\).
Then we say that \(f\in L^p(\mu)\) is a \textbf{\(p\)-Sobolev function} provided there exists \(G\in L^p(\mu)\) with
\(G\geq 0\) such that
\begin{equation}\label{eq:minimal_wug}
\int f(\gamma_1)-f(\gamma_0)\,\d\ppi(\gamma)\leq\int\!\!\!\int_0^1 G(\gamma_t)\|\dot\gamma_t\|_\B\,\d t\,\d\ppi(\gamma)
\quad\text{ for every }\ppi\in\Pi_q(\B,\mu).
\end{equation}
The \(\mu\)-a.e.\ minimal \(G\) verifying \eqref{eq:minimal_wug} is called the \textbf{minimal \(p\)-weak upper gradient} of \(f\)
and is denoted by \(|\D_\mu f|\in L^p(\mu)\). The space of all \(p\)-Sobolev functions on \((\B,\mu)\) is denoted by \(W^{1,p}(\B,\mu)\).
\end{definition}

The Sobolev space \(W^{1,p}(\B,\mu)\) is a Banach space if endowed with the following norm:
\[
\|f\|_{W^{1,p}(\B,\mu)}\coloneqq\Big(\|f\|_{L^p(\mu)}^p+\||\D_\mu f|\|_{L^p(\mu)}^p\Big)^{1/p}
\quad\text{ for every }f\in W^{1,p}(\B,\mu).
\]
It holds that \(\LIP_b(\B)\subseteq W^{1,p}(\B,\mu)\) and \(|\D_\mu f|\leq\lip(f)\) for every \(f\in\LIP_b(\B)\).
Moreover, the minimal \(p\)-weak upper gradient \(|\D_\mu f|\) of any given function \(f\in W^{1,p}(\B,\mu)\) can
be equivalently characterised as the \(\mu\)-a.e.\ minimal \(G\in L^p(\mu)\) with \(G\geq 0\) satisfying the following
property: for every \(\ppi\in\Pi_q(\B,\mu)\), we have that \(f\circ\gamma\in W^{1,1}(0,1)\) holds for
\(\ppi\)-a.e.\ \(\gamma\in C([0,1];\B)\) and
\begin{equation}\label{eq:ptwse_mwug}
|(f\circ\gamma)'_t|\leq G(\gamma_t)\|\dot\gamma_t\|_\B\quad\text{ for }\hat\ppi\text{-a.e.\ }(\gamma,t)\in\mathfrak C(\B).
\end{equation}

The following approximation result (which closes the gap with Cheeger's approach to metric Sobolev spaces \cite{Cheeger00},
based on a relaxation procedure) was proved in \cite[Theorem 5.2.7]{Sav19} after \cite{AmbrosioGigliSavare11-3}:
\begin{theorem}[Density in energy of compatible algebras]
Let \((\B,\mu)\) be a weighted Banach space such that \(\B\) has the Radon--Nikod\'{y}m property and let \(p\in(1,\infty)\).
Let \(\mathscr A\) be a compatible subalgebra of \(\LIP_b(\B)\). Let \(W^{1,p}(\B,\mu)\) be a given function. Then there exists
a sequence \((f_n)_n\subseteq\mathscr A\) such that
\[
f_n\to f,\quad|\d f_n|_{\B^*}\to|\D_\mu f|\quad\text{ strongly in }L^p(\mu).
\]
\end{theorem}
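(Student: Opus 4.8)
The plan is to realise $|\D_\mu f|$ as a \emph{relaxation} of the pointwise differentials of functions in $\mathscr A$, and then to identify this relaxed quantity with the minimal weak upper gradient. Accordingly, I would introduce the relaxed $p$-energy
\[
\Ch_{\mathscr A}(f)\coloneqq\inf\bigg\{\liminf_{n\to\infty}\int_\B|\d f_n|_{\B^*}^p\,\d\mu\ \bigg|\ (f_n)_n\subseteq\mathscr A,\ f_n\to f\text{ in }L^p(\mu)\bigg\},\qquad f\in L^p(\mu),
\]
where for merely Lipschitz $g$ the symbol $|\d g|_{\B^*}$ is read as the slope $\lip(g)$, the two agreeing on $C^1(\B)$. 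A standard relaxation analysis---using the subadditivity and positive homogeneity of $g\mapsto|\d g|_{\B^*}$, Mazur's lemma, and the uniform convexity of $L^p(\mu)$ for $p\in(1,\infty)$---shows that, whenever $\Ch_{\mathscr A}(f)<\infty$, there exist a \emph{minimal relaxed gradient} $G_f\in L^p(\mu)$ with $\int_\B G_f^p\,\d\mu=\Ch_{\mathscr A}(f)$ and an optimal sequence $(f_n)_n\subseteq\mathscr A$ with $f_n\to f$ in $L^p(\mu)$ and $|\d f_n|_{\B^*}\to G_f$ \emph{strongly} in $L^p(\mu)$. As this is exactly the mode of convergence required, the whole task reduces to proving $G_f=|\D_\mu f|$ for $f\in W^{1,p}(\B,\mu)$.

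The inequality $|\D_\mu f|\le G_f$ is the easy half, and I would derive it from lower semicontinuity of the minimal weak upper gradient. Indeed $\mathscr A\subseteq\LIP_b(\B)\subseteq W^{1,p}(\B,\mu)$ with $|\D_\mu f_n|\le|\d f_n|_{\B^*}\to G_f$ in $L^p(\mu)$, and the defining inequality \eqref{eq:minimal_wug}, being linear in the function and stable under $L^p(\mu)$-convergence along each fixed $\ppi\in\Pi_q(\B,\mu)$ (one passes to the limit in the endpoint and kinetic terms using $(\e_t)_\#\ppi\le\Comp(\ppi)\,\mu$ together with the kinetic energy bound), persists in the limit. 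Applying this along the optimal sequence yields $f\in W^{1,p}(\B,\mu)$ and $|\D_\mu f|\le G_f$ whenever $\Ch_{\mathscr A}(f)<\infty$.

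The reverse inequality, equivalently $\Ch_{\mathscr A}(f)\le\int_\B|\D_\mu f|^p\,\d\mu$ for every $f\in W^{1,p}(\B,\mu)$, is the heart of the matter and the step I expect to be the main obstacle; I would split it in two and chain the pieces diagonally. \textbf{(a)} \emph{Density in energy of Lipschitz functions:} each $f\in W^{1,p}(\B,\mu)$ is approximated in $L^p(\mu)$ by $g_n\in\LIP_b(\B)$ with $\limsup_n\int_\B\lip(g_n)^p\,\d\mu\le\int_\B|\D_\mu f|^p\,\d\mu$. This is the genuinely metric density-in-energy result of \cite{AmbrosioGigliSavare11-3}: one regularises $f$ (for instance through the Hopf--Lax semigroup, whose values satisfy a metric Hamilton--Jacobi subsolution inequality) and couples this with the pointwise test-plan characterisation \eqref{eq:ptwse_mwug}, via the duality between energy dissipation and the action of $q$-test plans, to dominate the relaxed slope by $|\D_\mu f|$. \textbf{(b)} \emph{Density in energy of $\mathscr A$ in $\LIP_b(\B)$:} each $g\in\LIP_b(\B)$ admits $g_n\in\mathscr A$ with $g_n\to g$ in $L^p(\mu)$ and $\limsup_n\int_\B|\d g_n|_{\B^*}^p\,\d\mu\le\int_\B\lip(g)^p\,\d\mu$. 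Here the \emph{compatibility} of $\mathscr A$ enters decisively: the hypothesis that $\mathscr A$ recovers the norm of $\B$ is exactly what lets a Stone--Weierstrass argument together with a finite-dimensional mollification scheme produce algebra approximants whose pointwise differentials remain asymptotically dominated by $\lip(g)$. Combining (a) and (b) gives $\Ch_{\mathscr A}(f)\le\int_\B|\D_\mu f|^p\,\d\mu$; together with the easy half this forces $G_f=|\D_\mu f|$, and the optimal sequence then furnishes the desired approximants.
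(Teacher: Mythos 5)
Note first that the paper does not prove this theorem at all: it quotes it from \cite[Theorem 5.2.7]{Sav19} (with \cite{LucPas24} mentioned as an alternative proof), so the relevant benchmark is those proofs. Within your proposal, the relaxation framework, the existence of a pointwise-minimal relaxed gradient \(G_f\) together with an optimal sequence whose slopes converge strongly (Mazur plus uniform convexity of \(L^p(\mu)\)), and the ``easy half'' \(|\D_\mu f|\le G_f\) (passing to the limit in \eqref{eq:minimal_wug} along a fixed plan \(\ppi\), using \((\e_t)_\#\ppi\le\Comp(\ppi)\mu\) and H\"{o}lder against the kinetic energy) are all correct and standard; step (a) is also legitimately delegated to \cite{AmbrosioGigliSavare11-3}, whose result applies verbatim to the metric measure space \((\B,\|\cdot\|_\B,\mu)\).

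The genuine gap is step (b). First, as stated it is not an easier intermediate claim but essentially the theorem itself: since \(|\D_\mu g|\le\lip(g)\), (b) is a special case of the theorem, and conversely (b) together with (a) yields the theorem; so the reduction carries no content unless (b) admits an independent proof. Second, the justification you offer for (b) --- ``Stone--Weierstrass together with a finite-dimensional mollification scheme'' --- does not work. Stone--Weierstrass gives uniform approximation on compacta with \emph{no} control on the slopes of the approximants; with more care (McShane-type infima built from the \(1\)-Lipschitz elements of \(\mathscr A\) that compatibility provides, with lattice operations replaced by polynomial calculus) one can bound the slopes of the approximants by the \emph{global} Lipschitz constant \(\Lip(g)\), but (b) demands control by the \emph{pointwise} slope \(\lip(g)\) in \(L^p(\mu)\), a much stronger and measure-dependent statement: \(\lip(g)\) can vanish \(\mu\)-a.e.\ while \(\Lip(g)=1\) (take \(\mu=\delta_{x_0}+\delta_{x_1}\) and \(g\) locally constant near \(x_0,x_1\) with \(g(x_0)=0\), \(g(x_1)=1\); then (b) forces algebra elements whose slopes at both atoms tend to \(0\) while their values separate, something no Stone--Weierstrass argument produces). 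Moreover, mollification is unavailable here: there is no reference measure to convolve against in infinite dimensions, and even cylindrical/finite-dimensional smoothing produces functions that have no reason to lie in an abstract compatible algebra \(\mathscr A\), which is closed under sums and products but not under convolution, composition with smooth functions, or lattice operations. This pointwise control is precisely where the cited proofs invest their real work: \cite{Sav19} proves that the energy relaxed from \emph{any} compatible algebra admits the same dual (test-plan/Hopf--Lax) representation, working on the compactification attached to \(\mathscr A\), rather than approximating Lipschitz functions by algebra elements slope-by-slope; and \cite{LucPas24} treats only algebras containing the smooth cylindrical functions, exactly because its finite-dimensional reduction needs them. Without an argument of one of these types, your step (b) is an assertion, not a proof.
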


Another proof of the density in energy of smooth cylindrical functions (and thus of every subalgebra of \(\LIP_b(\B)\) containing them)
has been recently obtained in \cite{LucPas24}. Whereas the results of \cite{AmbrosioGigliSavare11-3,Sav19} are for arbitrary metric
measure spaces and rely on metric tools, the arguments in \cite{LucPas24} are tailored for weighted Banach spaces and are based on a
purely smooth analysis.
\medskip

It is unknown whether \(\LIP_b(\B)\) is strongly dense in \(W^{1,p}(\B,\mu)\). A sufficient condition for the strong
density of all compatible algebras is the reflexivity of \(W^{1,p}(\B,\mu)\), see \cite[Proposition 42]{ACM14}.
Examples of non-reflexive Sobolev spaces are known (see \cite[Proposition 44]{ACM14} and \cite[Section 12.5]{Heinonen07}).
We also point out that if \(W^{1,p}(\B,\mu)\) is reflexive, then it is separable (see again \cite[Proposition 42]{ACM14})
and every compatible subalgebra of \(\LIP_b(\B)\) is strongly dense in it.
\begin{remark}{\rm
It is shown in \cite{DiMarinoSpeight13} that minimal \(p\)-weak upper gradients depend on \(p\), in the sense that
for \(f\in W^{1,p}(\B,\mu)\cap W^{1,\tilde p}(\B,\mu)\) with \(p\neq\tilde p\) it can happen that the minimal
\(p\)-weak upper gradient of \(f\) differs from its minimal \(\tilde p\)-weak upper gradient. Nevertheless, in our
notation \(|\D_\mu f|\) we do not specify the exponent \(p\), since the latter will be always clear from the context.
\fr}\end{remark}
\subsubsection*{Master test plans}
Let \((\B,\mu)\) be a weighted Banach space such that \(\B\) has the Radon--Nikod\'{y}m property. Fix an exponent \(q\in(1,\infty)\).
To any \(q\)-test plan \(\ppi\in\Pi_q(\B,\mu)\) we associate the Borel set \({\rm S}_\sppi\subseteq\B\) given by
\begin{equation}\label{eq:def_S_ppi}
{\rm S}_\sppi\coloneqq\bigg\{x\in\B\;\bigg|\;\frac{\d(\e_\#\hat\ppi)}{\d\mu}(x)>0\bigg\}.
\end{equation}
The set \({\rm S}_\sppi\) is uniquely determined up to \(\mu\)-a.e.\ null sets, and
\(\mu|_{{\rm S}_\sppi}\ll\e_\#\hat\ppi\) and \((\e_\#\hat\ppi)|_{\B\setminus{\rm S}_\sppi}=0\).
\medskip

It is proved in \cite[Theorem 2.6]{Pasqualetto22} that one can always find a \textbf{master \(q\)-test plan} \(\ppi\)
on \((\B,\mu)\), i.e.\ a \(q\)-test plan having the following property: for any \(f\in W^{1,p}(\B,\mu)\), the function
\(|\D_\mu f|\in L^p(\mu)\) is the minimal \(G\in L^p(\mu)\) with \(G\geq 0\) such that \eqref{eq:ptwse_mwug} holds.
See also \cite[Theorem A.2]{GigliNobili} for an alternative proof of the existence of a master \(q\)-test plan.
\medskip

Moreover, it is proved in \cite[Proposition 2]{Pasqualetto23} that a given \(\ppi\in\Pi_q(\B,\mu)\) is a master \(q\)-test plan
if and only if for every function \(f\in W^{1,p}(\B,\mu)\) it holds that \(|\D_\mu f|=0\) \(\mu\)-a.e.\ on \(\B\setminus{\rm S}_\sppi\) and
\begin{equation}\label{eq:equiv_master_tp}
|\D_\mu f|(x)=\underset{\hat\sppi_x\text{-a.e.\ }(\gamma,t)}{\rm ess\,sup}\1_{\{\Der\neq 0\}}(\gamma,t)
\frac{|(f\circ\gamma)'_t|}{\|\dot\gamma_t\|_\B}\quad\text{ for }\e_\#\hat\ppi\text{-a.e.\ }x\in\B,
\end{equation}
where \(\hat\ppi=\int\hat\ppi_x\,\d(\e_\#\hat\ppi)(x)\) denotes the disintegration of \(\hat\ppi\) along \(\e\).
\subsubsection*{Distributional divergence}
Testing against smooth functions, we can define the space of vector fields with \(\mu\)-divergence:
\begin{definition}[Distributional \(\mu\)-divergence]\label{def:distr_div}
Let \((\B,\mu)\) be a weighted Banach space and let \(q\in(1,\infty)\). Then we define \(D_q(\div_\mu)\subseteq L^q(\B,\mu;\B)\) as the space of all vector fields \(v\in L^q(\B,\mu;\B)\)
for which there exists a function \(\div_\mu(v)\in L^q(\mu)\), called the \textbf{\(\mu\)-divergence} of \(v\), such that
\[
\int\d f(v)\,\d\mu=-\int f\,\div_\mu(v)\,\d\mu\quad\text{ for every }f\in C^1(\B)\cap\LIP_b(\B).
\]
The function \(\div_\mu(v)\in L^q(\mu)\) is uniquely determined by the density of \(C^1(\B)\cap\LIP_b(\B)\) in \(L^p(\mu)\).
\end{definition}

Some comments on the space of vector fields with distributional \(\mu\)-divergence are in order:
\begin{itemize}
\item \(D_q(\div_\mu)\) is a vector subspace of \(L^q(\B,\mu;\B)\) and \(\div_\mu\colon D_q(\div_\mu)\to L^q(\mu)\) is a linear operator.
\item The \textbf{Leibniz rule} holds, i.e.\ for every \(h\in C^1(\B)\cap\LIP_b(\B)\)
and \(v\in D_q(\div_\mu)\) we have that
\begin{equation}\label{eq:Leibniz_div_mu}
hv\in D_q(\div_\mu),\quad\div_\mu(hv)=h\,\div_\mu(v)+\d h(v).
\end{equation}
Indeed, \(\int\d f(hv)\,\d\mu=\int\d(fh)(v)\,\d\mu-\int f\,\d h(v)\,\d\mu=-\int f(h\,\div_\mu(v)+\d h(v))\,\d\mu\)
holds for every \(f\in C^1(\B)\cap\LIP_b(\B)\), whence the validity of \eqref{eq:Leibniz_div_mu} follows.
\item In particular, the space \(D_q(\div_\mu)\) is a \(C^1(\B)\cap\LIP_b(\B)\)-submodule (thus, a vector subspace) of \(L^q(\B,\mu;\B)\).
Since \(C^1(\B)\cap\LIP_b(\B)\) is a weakly\(^*\) dense subalgebra of \(L^\infty(\mu)\), it follows that
\({\rm cl}_{L^q(\B,\mu;\B)}(D_q(\div_\mu))\) is a \(L^\infty(\mu)\)-submodule of \(L^q(\B,\mu;\B)\).
\end{itemize}
\subsection{Measurable Banach bundles}
According to \cite[Definition 18.1]{AliprantisBorder99}, a given multivalued mapping
\(\boldsymbol\varphi\colon\X\twoheadrightarrow\Y\) between a measurable space \((\X,\Sigma)\)
and a topological space \((\Y,\tau)\) is said to be \textbf{weakly measurable} provided it satisfies
\[
\{x\in\X\;|\;\boldsymbol\varphi(x)\cap U\neq\varnothing\}\in\Sigma\quad\text{ for every }U\in\tau.
\]
The following result, which follows from the Kuratowski--Ryll-Nardzewski selection theorem
(see \cite[Corollary 18.14]{AliprantisBorder99}), gives a useful criterion to detect weakly measurable multivalued mappings:
\begin{proposition}\label{prop:Castaing}
Let \((\X,\Sigma)\) be a measurable space, \(\B\) a separable Banach space, and \({\bf C}\colon\X\twoheadrightarrow\B\)
a multivalued mapping such that \({\bf C}(x)\) is a closed non-empty subset of \(\B\) for every \(x\in\X\). Then \(\bf C\)
is weakly measurable if and only if there exists a sequence \((v_n)_{n\in\N}\) of measurable maps
\(v_n\colon\X\to\B\) such that \({\bf C}(x)={\rm cl}_\B(\{v_n(x)\,:\,n\in\N\})\) for every \(x\in\X\).
\end{proposition}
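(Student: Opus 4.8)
The plan is to prove the two implications separately: the backward direction is elementary, while the forward direction is the Castaing representation and rests on the Kuratowski--Ryll-Nardzewski selection theorem applied to a suitable family of auxiliary multifunctions. For the backward implication, suppose such a sequence \((v_n)_{n\in\N}\) exists. Given an open set \(U\subseteq\B\), I would first observe that, since \(U\) is open and \({\bf C}(x)={\rm cl}_\B(\{v_n(x):n\in\N\})\), the intersection \({\bf C}(x)\cap U\) is non-empty if and only if \(v_n(x)\in U\) for some \(n\in\N\); indeed, any point of the closure that lies in the open set \(U\) forces some approximating \(v_n(x)\) to lie in \(U\) as well, and conversely each \(v_n(x)\) belongs to \({\bf C}(x)\). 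Hence
\[
\{x\in\X\,:\,{\bf C}(x)\cap U\neq\varnothing\}=\bigcup_{n\in\N}v_n^{-1}(U),
\]
which belongs to \(\Sigma\) because each \(v_n\) is measurable and \(U\) is Borel. This proves that \(\bf C\) is weakly measurable.

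For the forward implication, assume \(\bf C\) is weakly measurable. Using separability, I would fix a countable dense set \(\{d_k\}_{k\in\N}\subseteq\B\) and, for each \(k,m\in\N\), write \(B_{k,m}\coloneqq B(d_k,1/m)\) for the corresponding open ball and set \(E_{k,m}\coloneqq\{x\in\X\,:\,{\bf C}(x)\cap B_{k,m}\neq\varnothing\}\in\Sigma\). The idea is to produce, for each pair \((k,m)\), a measurable selection of \(\bf C\) that is steered into \(B_{k,m}\) wherever this is possible. Concretely, I would define the multivalued map \({\bf C}_{k,m}\colon\X\twoheadrightarrow\B\) by \({\bf C}_{k,m}(x)\coloneqq{\rm cl}_\B({\bf C}(x)\cap B_{k,m})\) for \(x\in E_{k,m}\) and \({\bf C}_{k,m}(x)\coloneqq{\bf C}(x)\) for \(x\notin E_{k,m}\); by construction it has closed non-empty values and satisfies \({\bf C}_{k,m}(x)\subseteq{\bf C}(x)\) for every \(x\in\X\).

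The crux is to verify that each \({\bf C}_{k,m}\) is weakly measurable, so that the Kuratowski--Ryll-Nardzewski theorem---applicable since the separable Banach space \(\B\) is Polish---yields a measurable selection \(v_{k,m}\colon\X\to\B\) with \(v_{k,m}(x)\in{\bf C}_{k,m}(x)\) for every \(x\). To check weak measurability, I would fix an open set \(U\subseteq\B\) and split the relevant preimage according to membership in \(E_{k,m}\): on \(\X\setminus E_{k,m}\) the map agrees with \(\bf C\), contributing the measurable set \((\X\setminus E_{k,m})\cap\{x:{\bf C}(x)\cap U\neq\varnothing\}\); on \(E_{k,m}\), since \(U\) is open one has \({\rm cl}_\B({\bf C}(x)\cap B_{k,m})\cap U\neq\varnothing\) exactly when \({\bf C}(x)\cap(B_{k,m}\cap U)\neq\varnothing\), and as \(B_{k,m}\cap U\) is open this contributes \(E_{k,m}\cap\{x:{\bf C}(x)\cap(B_{k,m}\cap U)\neq\varnothing\}\in\Sigma\) by weak measurability of \(\bf C\). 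The union of the two pieces is measurable, as required.

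Finally I would check that \(\{v_{k,m}\}_{k,m\in\N}\) represents \(\bf C\). Each \(v_{k,m}(x)\) lies in the closed set \({\bf C}(x)\), so \({\rm cl}_\B(\{v_{k,m}(x):k,m\in\N\})\subseteq{\bf C}(x)\). For the reverse inclusion, given \(y\in{\bf C}(x)\) and \(\eps>0\), I would pick \(m\) with \(1/m<\eps/2\) and then \(k\) with \(\|d_k-y\|_\B<1/m\); then \(y\in{\bf C}(x)\cap B_{k,m}\), so \(x\in E_{k,m}\) and the membership \(v_{k,m}(x)\in{\rm cl}_\B({\bf C}(x)\cap B_{k,m})\) gives \(\|v_{k,m}(x)-d_k\|_\B\le 1/m\), whence \(\|v_{k,m}(x)-y\|_\B<2/m<\eps\). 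Thus the points \(v_{k,m}(x)\) are dense in \({\bf C}(x)\), completing the representation after relabelling the double-indexed family as a sequence. I expect the only genuine difficulty to be the measurability bookkeeping for \({\bf C}_{k,m}\) described above; everything else is routine once the selection theorem is invoked.
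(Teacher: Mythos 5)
Your proof is correct, and it is essentially the argument behind the result the paper itself does not prove but simply imports from \cite[Corollary 18.14]{AliprantisBorder99}: the Castaing representation, obtained by applying the Kuratowski--Ryll-Nardzewski selection theorem to the auxiliary multifunctions obtained by intersecting \({\bf C}\) with a countable family of balls, exactly as you do. Both the elementary backward implication and your measurability bookkeeping for \({\bf C}_{k,m}\) (using that an open set meets a set if and only if it meets its closure) are sound, so there is nothing to correct.
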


Let now us recall the notion of measurable Banach \(\B\)-bundle that was introduced in \cite{DMLP21}:
\begin{definition}[Measurable \(\B\)-bundle]\label{def:bundle}
Let \((\X,\Sigma)\) be a measurable space and \(\B\) a Banach space. Then a multivalued mapping
\({\bf E}\colon\X\twoheadrightarrow\B\) is said to be a \textbf{measurable \(\B\)-bundle} on \(\X\) provided it is weakly measurable
and \({\bf E}(x)\) is a closed vector subspace of \(\B\) for every \(x\in\X\).
\end{definition}
Given a finite measure space \((\X,\Sigma,\mu)\), we denote by \(\mathscr D_\mu(\B)\) the set of
measurable \(\B\)-bundles on \(\X\) quotiented up to \(\mu\)-a.e.\ equality. For any \({\bf E}\in\mathscr D_\mu(\B)\)
and \(q\in(1,\infty)\), the \textbf{\(q\)-section space}
\begin{equation}\label{eq:def_sect_bundle}
\Gamma_q({\bf E})\coloneqq\big\{v\in L^q(\X,\mu;\B)\;\big|\;v(x)\in{\bf E}(x)\,\text{ for }\mu\text{-a.e.\ }x\in\X\big\}
\end{equation}
is a closed \(L^\infty(\mu)\)-submodule of \(L^q(\X,\mu;\B)\), thus it is also a Banach space. Notice that \(L^q(\X,\mu;\B)\)
is the \(q\)-section space of the measurable \(\B\)-bundle on \(\X\) whose fibers are identically equal to \(\B\).
\medskip

The space \(\mathscr D_\mu(\B)\) is endowed with a natural partial order \(\preceq\): given any
\({\bf E},{\bf F}\in\mathscr D_\mu(\B)\), we declare that \({\bf E}\preceq{\bf F}\) if and only if
\({\bf E}(x)\subseteq{\bf F}(x)\) for \(\mu\)-a.e.\ \(x\in\X\). When \(\B\) is separable, it holds that
\({\bf E}\mapsto\Gamma_q({\bf E})\) is an order isomorphism between \(\mathscr D_\mu(\B)\) and the set of all closed
\(L^\infty(\mu)\)-submodules of \(L^q(\X,\mu;\B)\) (ordered by inclusion). See the proof of \cite[Proposition 2.22]{LPR21}
after \cite{LP18,DMLP21}.
\medskip

In this paper, the most relevant example of measurable \(\B\)-bundle is the \emph{\(\mu\)-tangent bundle}:
\begin{definition}[\(\mu\)-tangent bundle]\label{def:tg_bundle}
Let \((\B,\mu)\) be a weighted Banach space and let \(q\in(1,\infty)\). Then we define the \textbf{\(q\)-tangent bundle}
of \((\B,\mu)\) as the unique \(T_\mu\B\in\mathscr D_\mu(\B)\) such that
\[
\Gamma_q(T_\mu\B)={\rm cl}_{L^q(\B,\mu;\B)}(D_q(\div_\mu)).
\]
\end{definition}

The above definition is well posed thanks to the fact that the space \({\rm cl}_{L^q(\B,\mu;\B)}(D_q(\div_\mu))\)
is a closed \(L^\infty(\mu)\)-submodule of \(L^q(\B,\mu;\B)\).
\subsubsection*{Dual of a section space}
Let us now recall a characterisation of the dual of the \(q\)-section space of a measurable \(\B\)-bundle
obtained in \cite[Section 3.2]{LPV22}. Let \((\X,\Sigma,\mu)\) be a finite measure space, \(\B\) a separable
Banach space, \({\bf E}\) a measurable \(\B\)-bundle on \(\X\), and \(p\in(1,\infty)\). Then we denote by
\(\Gamma_p({\bf E}^*_{w^*})\) the space of \(\mu\)-a.e.\ equivalence classes of those maps
\(\omega\colon\X\to\bigsqcup_{x\in\X}{\bf E}(x)^*\) that verify the following properties:
\begin{itemize}
\item \(\omega(x)\in{\bf E}(x)^*\) for every \(x\in\X\).
\item The function \(\X\ni x\mapsto\omega(v)(x)\coloneqq\langle\omega(x),v(x)\rangle\) is measurable for every \(v\in\Gamma_q({\bf E})\).
\item The \(\mu\)-a.e.\ equivalence class \(|\omega|_{{\bf E}^*}\) of the function \(\X\ni x\mapsto\|\omega(x)\|_{{\bf E}(x)^*}\) belongs to \(L^p(\mu)\).
\end{itemize}
The space \(\Gamma_p({\bf E}^*_{w^*})\) has a natural structure of module over \(L^\infty(\mu)\) and
it is a Banach space if endowed with the norm \(\omega\mapsto\||\omega|_{{\bf E}^*}\|_{L^p(\mu)}\).
The dual of \(\Gamma_q({\bf E})\) can be identified with \(\Gamma_p({\bf E}^*_{w^*})\):
\begin{theorem}[Dual of a section space]\label{thm:dual_sect_space}
Let \((\X,\Sigma,\mu)\) be a finite measure space and \(q\in(1,\infty)\). Let \(\B\) be a separable Banach space
and \({\bf E}\in\mathscr D_\mu(\B)\). Then it holds that \(\Gamma_q({\bf E})^*\cong\Gamma_p({\bf E}^*_{w^*})\).
\end{theorem}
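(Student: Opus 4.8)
The plan is to establish the isomorphism $\Gamma_q({\bf E})^* \cong \Gamma_p({\bf E}^*_{w^*})$ by constructing an explicit duality pairing and proving it is an isometric isomorphism. The natural candidate for the pairing between $\omega \in \Gamma_p({\bf E}^*_{w^*})$ and $v \in \Gamma_q({\bf E})$ is the integrated duality
\[
\langle \omega, v \rangle \coloneqq \int_{\X} \langle \omega(x), v(x) \rangle \, \d\mu(x),
\]
which makes sense because the integrand is measurable (by the second defining property of $\Gamma_p({\bf E}^*_{w^*})$) and lies in $L^1(\mu)$ by Hölder: $|\langle\omega(x),v(x)\rangle| \leq \|\omega(x)\|_{{\bf E}(x)^*}\|v(x)\|_\B$, so the product $|\omega|_{{\bf E}^*}|v|_\B$ is in $L^1(\mu)$ with the two factors in $L^p(\mu)$ and $L^q(\mu)$ respectively. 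First I would verify that this pairing defines a bounded linear functional on $\Gamma_q({\bf E})$ of norm at most $\||\omega|_{{\bf E}^*}\|_{L^p(\mu)}$, giving a norm-nonincreasing linear map $\Gamma_p({\bf E}^*_{w^*}) \to \Gamma_q({\bf E})^*$.

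Next I would prove this map is injective and norm-preserving. For the isometry I would use a pointwise duality argument: for each $x$ one can choose $v(x) \in {\bf E}(x)$ with $\|v(x)\|_\B \leq 1$ nearly achieving $\|\omega(x)\|_{{\bf E}(x)^*}$, and the content here is to make such a selection \emph{measurably} in $x$. This is exactly where Proposition \ref{prop:Castaing} and the Kuratowski--Ryll-Nardzewski machinery enter: using the weak measurability of $\bf E$ together with a Castaing representation $\{v_n(x)\}$ whose closure is ${\bf E}(x)$, the supremum defining $\|\omega(x)\|_{{\bf E}(x)^*}$ can be realised over the countable dense family, preserving measurability. Arranging these selections into an element $v \in \Gamma_q({\bf E})$ of the correct normalisation (typically $v = |\omega|_{{\bf E}^*}^{p-1}\,\bar v$ for a measurable unit section $\bar v$ witnessing the dual norm) then shows $\langle\omega,v\rangle$ recovers $\||\omega|_{{\bf E}^*}\|_{L^p(\mu)}^p$ after normalising, yielding the reverse norm estimate.

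The main obstacle, and the heart of the theorem, is \textbf{surjectivity}: showing that every bounded functional $L \in \Gamma_q({\bf E})^*$ arises from some $\omega \in \Gamma_p({\bf E}^*_{w^*})$. The plan is to first extend $L$ to a functional on all of $L^q(\X,\mu;\B)$ by Hahn--Banach (since $\Gamma_q({\bf E})$ is a closed subspace), but because $\B$ need not be Asplund one cannot directly invoke $L^q(\X,\mu;\B)^* \cong L^p(\X,\mu;\B^*)$; this is precisely the difficulty the weak$^*$-measurable formulation of $\Gamma_p({\bf E}^*_{w^*})$ is designed to circumvent. Instead I would exploit the $L^\infty(\mu)$-module structure: the functional $L$ is $L^\infty(\mu)$-linear in the appropriate localised sense, so by a gluing/exhaustion argument (testing against $\1_A v$ for $A \in \Sigma$) one produces a $\B^*$-valued-in-a-weak-sense object $\omega(x)$ defined fiberwise on ${\bf E}(x)$. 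Concretely, one localises $L$ to obtain, for each fixed $v \in \Gamma_q({\bf E})$, a signed measure absolutely continuous with respect to $\mu$, whose Radon--Nikod\'ym density gives the pointwise pairing $\langle\omega(x),v(x)\rangle$; a measurable-selection/consistency check across a countable generating family of sections then assembles these densities into a genuine fiberwise-dual section $\omega$.

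Finally I would confirm that the $\omega$ so constructed satisfies all three defining properties of $\Gamma_p({\bf E}^*_{w^*})$---in particular that $|\omega|_{{\bf E}^*} \in L^p(\mu)$ with $\||\omega|_{{\bf E}^*}\|_{L^p(\mu)} \leq \|L\|$, which follows by applying the established pointwise duality in reverse to the densities---and that the pairing reproduces $L$. Since this entire construction is carried out in \cite[Section 3.2]{LPV22}, I expect the cleanest route is to cite that reference for the core duality and to indicate here only how the separability of $\B$ and the Castaing representation supply the measurable selections that drive both the isometry and the fiberwise assembly.
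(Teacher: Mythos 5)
Your proposal is consistent with the paper: the paper gives no proof of this theorem at all, simply citing \cite[Theorem 3.9]{LPV22}, which is exactly the route you settle on in your final paragraph. Your sketch of the underlying argument (the integrated duality pairing, Castaing-type measurable selections for the isometry, and localisation via Radon--Nikod\'{y}m densities over a countable generating family for surjectivity) is a plausible outline of what that reference does, and you correctly identify that the weak\(^*\)-measurable formulation is needed precisely because \(\B\) is not assumed Asplund, so there is nothing to correct.
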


For the proof of the above result, see \cite[Theorem 3.9]{LPV22}. In the case where \({\bf E}_\B\) is the bundle
whose fibers are constantly equal to \(\B\), we shall write \(L^p_{w^*}(\X,\mu;\B^*)\) instead of \(\Gamma_p(({\bf E}_\B)_{w^*}^*)\).
\begin{definition}\label{def:gen_mod}
Let \((\X,\Sigma,\mu)\) be a finite measure space and let \(q\in(1,\infty)\). Let \(\B\) be a Banach space
and \({\bf E}\in\mathscr D_\mu(\B)\). Then a vector subspace \(\mathcal V\) of \(\Gamma_q({\bf E})\) is said to
\textbf{generate} \(\Gamma_q({\bf E})\) provided
\[
{\rm cl}_{\Gamma_q({\bf E})}\bigg(\bigg\{\sum_{i=1}^n\1_{E_i}v_i\;\bigg|\;n\in\N,\,(E_i)_{i=1}^n\subseteq\Sigma
\text{ partition of }\X,\,(v_i)_{i=1}^n\subseteq\mathcal V\bigg\}\bigg)=\Gamma_q({\bf E}),
\]
or equivalently that the \(L^\infty(\mu)\)-linear span of \(\mathcal V\) is dense in \(\Gamma_q({\bf E})\).
Similarly, we say that a vector subspace \(\mathcal W\) of \(\Gamma_p({\bf E}_{w^*}^*)\) generates
\(\Gamma_p({\bf E}_{w^*}^*)\) if the \(L^\infty(\mu)\)-linear span of \(\mathcal W\) is dense in \(\Gamma_p({\bf E}_{w^*}^*)\).
\end{definition}
\begin{definition}\label{def:restr_Fr_diff}
Let \((\X,\Sigma,\mu)\) be a finite measure space and \(p\in(1,\infty)\). Let \(\B\) be a separable Banach space.
Assume that \({\bf E},{\bf F}\in\mathscr D_\mu(\B)\) satisfy \({\bf E}\preceq{\bf F}\). Then for any given element
\(\omega\in\Gamma_p({\bf F}_{w^*}^*)\) we define \(\omega_{|{\bf E}}\in\Gamma_p({\bf E}_{w^*}^*)\) as
\(\omega_{|{\bf E}}(x)\coloneqq\omega(x)|_{{\bf E}(x)}\in{\bf E}(x)^*\) for \(\mu\)-a.e.\ \(x\in\X\).
\end{definition}

Given a weighted Banach space \((\B,\mu)\), a bundle \({\bf E}\in\mathscr D_\mu(\B)\), and an exponent \(p\in(1,\infty)\),
we can consider the restricted Fr\'{e}chet differential
\begin{equation}\label{eq:restr_diff}
\d_{|{\bf E}}\colon C^1(\B)\cap\LIP_b(\B)\to\Gamma_p({\bf E}_{w^*}^*)\cong\Gamma_q({\bf E})^*,
\end{equation}
which is given by \(\d_{|{\bf E}}f\coloneqq\d f_{|{\bf E}}\in\Gamma_p({\bf E}_{w^*}^*)\) for every \(f\in C^1(\B)\cap\LIP_b(\B)\).
\begin{remark}\label{rmk:equiv_dual_sect}{\rm
In the case where \(\B\) is a separable Asplund space and \({\bf E}\in\mathscr D_\mu(\B)\) is given, the elements of \(\Gamma_p({\bf E}_{w^*}^*)\)
can be represented in a more concrete way, which we are going to describe. Recalling that \(L^q(\X,\mu;\B)^*\cong L^p(\X,\mu;\B^*)\), we can define
\([\,\cdot\,]_{{\bf E}^*}\colon L^p(\X,\mu;\B^*)\to L^q(\mu)\) as
\begin{equation}\label{eq:ptwse_seminorm}
[\omega]_{{\bf E}^*}\coloneqq\bigvee\{\omega(v)\;|\;v\in\Gamma_q({\bf E}),\,|v|_\B\leq 1\}\leq|\omega|_{\B^*}\quad\text{ for every }\omega\in L^p(\X,\mu;\B^*).
\end{equation}
One can readily check that \([\,\cdot\,]_{{\bf E}^*}\) is a `pointwise seminorm' on \(L^p(\X,\mu;\B^*)\), meaning that
\[\begin{split}
[\omega]_{{\bf E}^*}\geq 0&\quad\text{ for every }\omega\in L^p(\X,\mu;\B^*),\\
[\omega+\eta]_{{\bf E}}\leq[\omega]_{{\bf E}}+[\eta]_{{\bf E}}&\quad\text{ for every }\omega,\eta\in L^p(\X,\mu;\B^*),\\
[f\omega]_{{\bf E}^*}=|f|[\omega]_{{\bf E}^*}&\quad\text{ for every }f\in L^\infty(\mu)\text{ and }\omega\in L^p(\X,\mu;\B^*).
\end{split}\]
Moreover, the `restriction' operator \(\pi_{\bf E}\colon L^p(\X,\mu;\B^*)\to\Gamma_p({\bf E}_{w^*}^*)\), which we define as
\begin{equation}\label{eq:restr_op}
\pi_{\bf E}(\omega)\coloneqq\omega_{|{\bf E}}\in\Gamma_p({\bf E}_{w^*}^*)\quad\text{ for every }\omega\in L^p(\X,\mu;\B^*),
\end{equation}
is \(L^\infty(\mu)\)-linear and satisfies \(|\pi_{\bf E}(\omega)|_{{\bf E}^*}=[\omega]_{{\bf E}^*}\) for every \(\omega\in L^p(\X,\mu;\B^*)\).
Finally, by applying the Hahn--Banach theorem, one can prove that \(\pi_{\bf E}\) is surjective and that for any \(\tilde\omega\in\Gamma_p({\bf E}_{w^*}^*)\)
there exists a (non-unique) \(\omega\in L^p(\X,\mu;\B^*)\) such that \(\pi_{\bf E}(\omega)=\tilde\omega\) and \(|\omega|_{\B^*}=|\tilde\omega|_{{\bf E}^*}\).
\fr}\end{remark}
\begin{lemma}\label{lem:technical_mod}
Let \((\X,\Sigma,\mu)\) be a finite measure space and \(p\in(1,\infty)\). Let \(\B\) be a separable Banach space
and \({\bf E},{\bf F}\in\mathscr D_\mu(\B)\). Assume that \(\mathcal W\) is a generating vector subspace of
\(L^p_{w^*}(\X,\mu;\B^*)\) such that \(|\omega_{|{\bf E}}|_{{\bf E}^*}\leq|\omega_{|{\bf F}}|_{{\bf F}^*}\)
for every \(\omega\in\mathcal W\). Then it holds that \({\bf E}\preceq{\bf F}\).
\end{lemma}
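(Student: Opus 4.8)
The plan is to upgrade the hypothesised inequality from the generating subspace \(\mathcal W\) to the whole of \(L^p_{w^*}(\X,\mu;\B^*)=\Gamma_p(({\bf E}_\B)^*_{w^*})\), and then to read off \({\bf E}\preceq{\bf F}\) from it. Write \(\pi_{\bf E}(\omega)\coloneqq\omega_{|{\bf E}}\) for the restriction operator of Definition \ref{def:restr_Fr_diff} (applied with \({\bf E}\preceq{\bf E}_\B\)), which is defined and \(\R\)-linear on all of \(L^p_{w^*}(\X,\mu;\B^*)\), and similarly \(\pi_{\bf F}\). I would first record that \(\omega\mapsto|\omega_{|{\bf E}}|_{{\bf E}^*}\) and \(\omega\mapsto|\omega_{|{\bf F}}|_{{\bf F}^*}\) are \(1\)-Lipschitz maps from \(L^p_{w^*}(\X,\mu;\B^*)\) into \(L^p(\mu)\): since \(|\omega_{|{\bf E}}|_{{\bf E}^*}\le|\omega|_{\B^*}\) pointwise, the linear map \(\pi_{\bf E}\) is norm non-increasing, and the reverse triangle inequality for the pointwise dual norm gives the Lipschitz estimate. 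Consequently the set
\[
\mathcal N\coloneqq\big\{\omega\in L^p_{w^*}(\X,\mu;\B^*)\;\big|\;|\omega_{|{\bf E}}|_{{\bf E}^*}\le|\omega_{|{\bf F}}|_{{\bf F}^*}\ \mu\text{-a.e.}\big\}
\]
is \emph{closed}, being the preimage of the closed positive cone of \(L^p(\mu)\) under the continuous map \(\omega\mapsto|\omega_{|{\bf F}}|_{{\bf F}^*}-|\omega_{|{\bf E}}|_{{\bf E}^*}\). By assumption \(\mathcal W\subseteq\mathcal N\), and the goal is to prove \(\mathcal N=L^p_{w^*}(\X,\mu;\B^*)\).

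The main difficulty is that \(\mathcal N\) is \emph{not} a linear subspace: the inequality does not survive addition, since on the right-hand side the pointwise dual norm of a sum may be strictly smaller than the sum of the dual norms (cancellation inside \({\bf F}\)), so a generic \(L^\infty(\mu)\)-combination \(\sum_i f_i\omega_i\) of elements of \(\mathcal W\) need not lie in \(\mathcal N\). The observation that circumvents this is that the density encoded in the generating hypothesis (Definition \ref{def:gen_mod}) is density of the combinations \(\sum_{i=1}^n\1_{E_i}\omega_i\) with \(\omega_i\in\mathcal W\) and \((E_i)_{i=1}^n\) a \emph{partition} of \(\X\); these have pairwise disjoint supports, so no cancellation can occur. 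Indeed, for such \(\eta=\sum_{i=1}^n\1_{E_i}\omega_i\) one has \(\eta=\omega_i\) on \(E_i\), whence \(|\eta_{|{\bf E}}|_{{\bf E}^*}=|{\omega_i}_{|{\bf E}}|_{{\bf E}^*}\) and \(|\eta_{|{\bf F}}|_{{\bf F}^*}=|{\omega_i}_{|{\bf F}}|_{{\bf F}^*}\) hold \(\mu\)-a.e.\ on \(E_i\); as each \(\omega_i\in\mathcal W\) obeys the hypothesised inequality, the inequality for \(\eta\) holds on every \(E_i\), hence \(\mu\)-a.e.\ on \(\X\). Thus all these \(\eta\) lie in \(\mathcal N\); since they are dense in \(L^p_{w^*}(\X,\mu;\B^*)\) and \(\mathcal N\) is closed, we obtain \(\mathcal N=L^p_{w^*}(\X,\mu;\B^*)\), i.e.\ \(|\omega_{|{\bf E}}|_{{\bf E}^*}\le|\omega_{|{\bf F}}|_{{\bf F}^*}\) for \emph{every} \(\omega\).

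Finally I would deduce \({\bf E}\preceq{\bf F}\) from this global inequality. Any \(\omega\) with \(\omega_{|{\bf F}}=0\) then forces \(\omega_{|{\bf E}}=0\), that is \(\ker\pi_{\bf F}\subseteq\ker\pi_{\bf E}\). In the duality between \(L^q(\X,\mu;\B)\) and its dual \(L^p_{w^*}(\X,\mu;\B^*)\), one checks that \(\ker\pi_{\bf F}=\Gamma_q({\bf F})^\perp\): for \(v\in\Gamma_q({\bf F})\) one has \(\langle\omega_{|{\bf F}},v\rangle=\langle\omega,v\rangle\) because \(v(x)\in{\bf F}(x)\), and the converse inclusion uses that \(\Gamma_p({\bf F}^*_{w^*})\cong\Gamma_q({\bf F})^*\) (Theorem \ref{thm:dual_sect_space}) separates the points of \(\Gamma_q({\bf F})\); likewise \(\ker\pi_{\bf E}=\Gamma_q({\bf E})^\perp\). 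Hence \(\Gamma_q({\bf F})^\perp\subseteq\Gamma_q({\bf E})^\perp\), and taking pre-annihilators of the closed subspaces \(\Gamma_q({\bf E}),\Gamma_q({\bf F})\) (for which \({}^\perp(\Gamma_q(\cdot)^\perp)=\Gamma_q(\cdot)\)) yields \(\Gamma_q({\bf E})\subseteq\Gamma_q({\bf F})\). Since \({\bf E}\mapsto\Gamma_q({\bf E})\) is an order isomorphism onto the closed \(L^\infty(\mu)\)-submodules of \(L^q(\X,\mu;\B)\) (recorded after Definition \ref{def:bundle}), this is exactly \({\bf E}\preceq{\bf F}\).
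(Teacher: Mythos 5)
Your proof is correct, and it follows a genuinely different route from the paper's. The paper argues by contradiction, working pointwise on a bad set: it uses Proposition \ref{prop:Castaing} to select \(v\in\Gamma_q({\bf E})\) staying at distance \(\geq\lambda\) from \({\bf F}(x)\) on a set \(P\) of positive measure, Hahn--Banach to build \(\eta\in L^p_{w^*}(\X,\mu;\B^*)\) with \(\eta(v)=|v|_\B^q\) on \(P\) and \(\eta\equiv 0\) against \(\Gamma_q({\bf F})\), and then the generating hypothesis to replace \(\eta\) by a single \(\omega\in\mathcal W\) that is pointwise \(\varepsilon\)-close to \(\eta\) a.e.\ on a shrunk \(P\), reaching a contradiction through a chain of pointwise estimates. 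You instead first upgrade the hypothesis to \emph{all} of \(L^p_{w^*}(\X,\mu;\B^*)\) -- the key insight being that the set \(\mathcal N\) where the inequality holds is closed and contains all partition-indicator combinations of \(\mathcal W\) (disjoint supports prevent the cancellation that makes \(\mathcal N\) fail to be a subspace) -- and then convert the global pointwise inequality into the soft statement \(\ker\pi_{\bf F}\subseteq\ker\pi_{\bf E}\), which via annihilators, pre-annihilators and the order isomorphism after Definition \ref{def:bundle} gives \(\Gamma_q({\bf E})\subseteq\Gamma_q({\bf F})\), i.e.\ \({\bf E}\preceq{\bf F}\). What your approach buys: it isolates a reusable intermediate fact (the inequality automatically extends from a generating subspace to the whole dual module) and replaces the measurable-selection and pointwise bookkeeping by standard Banach-space duality; what the paper's approach buys: it is quantitative and self-contained at the pointwise level, never needing the identity \(\ker\pi_{\bf F}=\Gamma_q({\bf F})^\perp\). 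Two small points you should tighten. First, for the dual module, Definition \ref{def:gen_mod} phrases ``generating'' via density of the \(L^\infty(\mu)\)-linear span, so you need the one-line observation that, \(\mathcal W\) being a vector subspace, the partition-indicator combinations are dense in that span (approximate the \(L^\infty\) coefficients by simple functions and pass to a common refinement of the level sets); this is the same equivalence the paper asserts with ``or equivalently'' in the primal case. Second, the property you need for the inclusion \(\Gamma_q({\bf F})^\perp\subseteq\ker\pi_{\bf F}\) is the injectivity of the canonical map \(\Gamma_p({\bf F}^*_{w^*})\to\Gamma_q({\bf F})^*\), i.e.\ that \(\Gamma_q({\bf F})\) separates the points of \(\Gamma_p({\bf F}^*_{w^*})\) through the pairing -- not, as you wrote, the other way around; this is precisely what Theorem \ref{thm:dual_sect_space} provides, so the step is fine once rephrased.
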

\begin{proof}
We argue by contradiction: assume there exists \(P\in\Sigma\) with \(\mu(P)>0\) and
\({\bf E}(x)\setminus{\bf F}(x)\neq\varnothing\) for \(\mu\)-a.e.\ \(x\in P\). Applying Proposition
\ref{prop:Castaing}, we can find \(v\in\Gamma_q({\bf E})\) and \(\lambda>0\) for which (up to shrinking \(P\))
it holds that \(v(x)\notin{\bf F}(x)\) and \(\|v(x)\|_\B\geq\lambda\) for \(\mu\)-a.e.\ \(x\in P\). By the Hahn--Banach theorem,
there exists \(\eta\in L^p_{w^*}(\X,\mu;\B^*)\) such that \(\eta(v)=|v|_\B^q\) holds \(\mu\)-a.e.\ on \(P\) and
\(\eta(u)=0\) for every \(u\in\Gamma_q({\bf F})\). Fix any \(\varepsilon\in(0,\lambda^{q-1}/2)\).
Since \(\mathcal W\) generates \(L^p_{w^*}(\X,\mu;\B^*)\), we can find \(\omega\in\mathcal W\) such that
(up to shrinking \(P\) further) we have that \(|\eta-\omega|_{\B^*}\leq\varepsilon\) holds \(\mu\)-a.e.\ on \(P\).
In particular, we have that \(\omega(u)\leq\eta(u)+|\omega-\eta|_{\B^*}|u|_\B\leq\varepsilon|u|_\B\) holds \(\mu\)-a.e.\ on
\(P\) for every given \(u\in\Gamma_q({\bf F})\), whence it follows that \(|\omega_{|{\bf F}}|_{{\bf F}^*}\leq\varepsilon\) holds
\(\mu\)-a.e.\ on \(P\). Hence, we have (again, \(\mu\)-a.e.\ on \(P\)) that
\[
\lambda^{q-1}|v|_\B\leq|v|_\B^q=\eta(v)\leq|\omega(v)|+\varepsilon|v|_\B\leq(|\omega_{|{\bf E}}|_{{\bf E}^*}+\varepsilon)|v|_\B
\leq(|\omega_{|{\bf F}}|_{{\bf F}^*}+\varepsilon)|v|_\B\leq 2\varepsilon|v|_\B,
\]
so that \(\lambda^{q-1}\leq 2\varepsilon\), which contradicts our choice of \(\varepsilon\). Therefore, we conclude that \({\bf E}\preceq{\bf F}\).
\end{proof}

We now present another technical result, which will be useful in the later sections:
\begin{lemma}\label{lem:cyl_generators_cotg}
Let \((\B,\mu)\) be a weighted Banach space and let \(p\in(1,\infty)\). Let \(\mathscr A\) be a compatible subalgebra of \(C^1(\B)\cap\LIP_b(\B)\) containing \(\B^*\).
Then \(\{\d f\,:\,f\in\mathscr A\}\) generates \(L^p(\B,\mu;\B^*)\). If in addition the space \(\B\) is Asplund, then \(\{\d_{|{\bf E}}f\,:\,f\in\mathscr A\}\)
generates \(\Gamma_p({\bf E}_{w^*}^*)\) for every \({\bf E}\in\mathscr D_\mu(\B)\).
\end{lemma}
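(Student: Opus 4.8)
The plan is to reduce both assertions to a single fact — that the constant sections $x\mapsto\ell$, with $\ell\in\B^*$, already generate $L^p(\B,\mu;\B^*)$ — and then, for the Asplund case, to transport this generation property through the restriction operator $\pi_{\bf E}$. Throughout, ``generating'' is read in the sense of Definition \ref{def:gen_mod}, i.e.\ having dense $L^\infty(\mu)$-linear span. I would begin by noting that every $\ell\in\B^*$, being linear, has constant Fr\'echet differential $\d_x\ell=\ell$; since $\B^*\subseteq\mathscr A$, each constant section $x\mapsto\ell$ thus lies in $\{\d f:f\in\mathscr A\}$. (If one prefers not to regard an unbounded functional as an element of $\LIP_b(\B)$, the same conclusion is reached by truncation: composing $\ell$ with a smooth bounded $\theta\colon\R\to\R$ equal to the identity on a large interval produces a genuine element $f_R\in C^1(\B)\cap\LIP_b(\B)$ with $\d f_R=\ell$ on $\{|\ell|\le R\}$, and the finiteness of $\mu$ forces $\d f_R\to\ell$ in $L^p(\B,\mu;\B^*)$; so the constant section $\ell$ belongs to the $L^p$-closure of $\{\d f:f\in\mathscr A\}$ in any case. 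This is the precise content I would attach to the hypothesis ``$\mathscr A\supseteq\B^*$''.)

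Next I would prove that the constant sections generate $L^p(\B,\mu;\B^*)$, which yields the first claim. For this I invoke the density of $\B^*$-valued simple functions: any $\omega\in L^p(\B,\mu;\B^*)$ is strongly measurable, hence essentially separably valued (even though $\B^*$ need not be separable), and is therefore approximable in $L^p$ by simple functions of the form $\sum_i\1_{E_i}\omega_i$ with $\omega_i\in\B^*$. Each such simple function is exactly an $L^\infty(\mu)$-linear combination of the constant sections $\omega_i$, the indicators $\1_{E_i}$ being in $L^\infty(\mu)$. Hence the $L^\infty(\mu)$-span of the constant sections is dense, and since these sections lie in $\{\d f:f\in\mathscr A\}$ (or in its $L^p$-closure), the family $\{\d f:f\in\mathscr A\}$ generates $L^p(\B,\mu;\B^*)$.

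For the Asplund case I would transfer generation through the operator $\pi_{\bf E}$ of \eqref{eq:restr_op}. When $\B$ is separable Asplund, Remark \ref{rmk:equiv_dual_sect} gives the identification $L^p_{w^*}(\B,\mu;\B^*)=L^p(\B,\mu;\B^*)$ and asserts that $\pi_{\bf E}\colon L^p(\B,\mu;\B^*)\to\Gamma_p({\bf E}_{w^*}^*)$ is $L^\infty(\mu)$-linear and surjective; moreover $|\pi_{\bf E}(\omega)|_{{\bf E}^*}=[\omega]_{{\bf E}^*}\le|\omega|_{\B^*}$ by \eqref{eq:ptwse_seminorm}, so $\pi_{\bf E}$ is $1$-Lipschitz and in particular continuous. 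Since $\d_{|{\bf E}}f=\pi_{\bf E}(\d f)$ and $\pi_{\bf E}$ is $L^\infty(\mu)$-linear, the $L^\infty(\mu)$-span of $\{\d_{|{\bf E}}f:f\in\mathscr A\}$ equals $\pi_{\bf E}(\mathcal V)$, where $\mathcal V$ is the $L^\infty(\mu)$-span of $\{\d f:f\in\mathscr A\}$, dense in $L^p(\B,\mu;\B^*)$ by the first part. A continuous linear surjection carries dense sets to dense sets, as $\Gamma_p({\bf E}_{w^*}^*)=\pi_{\bf E}\big(L^p(\B,\mu;\B^*)\big)=\pi_{\bf E}\big(\overline{\mathcal V}\big)\subseteq\overline{\pi_{\bf E}(\mathcal V)}$, whence $\{\d_{|{\bf E}}f:f\in\mathscr A\}$ generates $\Gamma_p({\bf E}_{w^*}^*)$.

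I expect the two density statements to be routine; the step deserving the most care is the surjectivity of $\pi_{\bf E}$, which rests on the Hahn--Banach argument recorded in Remark \ref{rmk:equiv_dual_sect} together with the Asplund identification $L^q(\B,\mu;\B)^*\cong L^p(\B,\mu;\B^*)$. This surjectivity is precisely what allows the generation property to descend from the full cotangent module $L^p(\B,\mu;\B^*)$ to the restricted sections $\Gamma_p({\bf E}_{w^*}^*)$, and it is the only place where the Asplund hypothesis is genuinely used. The secondary delicate point — reconciling ``$\mathscr A\supseteq\B^*$'' with $\B^*\not\subseteq\LIP_b(\B)$ — is dispatched by the truncation remark in the first paragraph.
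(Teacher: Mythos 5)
Your proof is correct and follows essentially the same route as the paper's: the first claim via density of $\B^*$-valued simple maps in $L^p(\B,\mu;\B^*)$ together with the fact that differentials of elements of $\B^*$ are the corresponding constant sections, and the Asplund case via the Hahn--Banach surjectivity and the contractivity $|\pi_{\bf E}(\omega)|_{{\bf E}^*}\leq|\omega|_{\B^*}$ of the restriction operator from Remark \ref{rmk:equiv_dual_sect} (the paper performs the lift--approximate--restrict step explicitly for each section, while you package it as ``a continuous $L^\infty(\mu)$-linear surjection maps a dense span onto a dense span'', which is the same argument). Your truncation remark reconciling the hypothesis $\B^*\subseteq\mathscr A$ with $\mathscr A\subseteq\LIP_b(\B)$ addresses a point the paper glosses over, but it does not alter the substance of the proof.
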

\begin{proof}
By the very definition of \(L^p(\B,\mu;\B^*)\), for any \(\omega\in L^p(\B,\mu;\B^*)\) and \(\varepsilon>0\) we can find a simple map \(\eta=\sum_{i=1}^n\1_{E_i}\omega_i\),
where \((E_i)_{i=1}^n\) is a Borel partition of \(\B\) and \(\omega_1,\ldots,\omega_n\in\B^*\), such that \(\|\omega-\eta\|_{L^p(\B,\mu;\B^*)}\leq\varepsilon\).
Since \(\B^*\subseteq\mathscr A\) by assumption and the Fr\'{e}chet differential of an element of \(\B^*\) coincides with the element itself, the above shows that \(\{\d f\,:\,f\in\mathscr A\}\)
generates \(L^p(\B,\mu;\B^*)\).

Let us now assume further that \(\B\) is Asplund. Fix any \({\bf E}\in\mathscr D_\mu(\B)\). Given any \(\omega\in\Gamma_p({\bf E}_{w^*}^*)\), we can find
\(\bar\omega\in L^p(\B,\mu;\B^*)\) with \(\bar\omega_{|{\bf E}}=\omega\) (and \(|\omega|_{\B^*}=|\omega|_{{\bf E}^*}\)), see Remark \ref{rmk:equiv_dual_sect}.
Now fix \(\varepsilon>0\). The first part of the statement yields the existence of functions \(f_1,\ldots,f_n\in\mathscr A\) and of a Borel partition \(E_1,\ldots,E_n\) of \(\B\)
such that \(\bar\eta\coloneqq\sum_{i=1}^n\1_{E_i}\d f_i\in L^p(\B,\mu;\B^*)\) satisfies \(\|\bar\omega-\bar\eta\|_{L^p(\B,\mu;\B^*)}\leq\varepsilon\). Letting
\(\eta\coloneqq\bar\eta_{|{\bf E}}=\sum_{i=1}^n\1_{E_i}\d_{|{\bf E}}f_i\in\Gamma_p({\bf E}_{w^*}^*)\), we have that
\(\|\omega-\eta\|_{\Gamma_p({\bf E}_{w^*}^*)}\leq\|\bar\omega-\bar\eta\|_{L^p(\B,\mu;\B^*)}\leq\varepsilon\). This shows that \(\{\d_{|{\bf E}}f\,:\,f\in\mathscr A\}\)
generates \(\Gamma_p({\bf E}_{w^*}^*)\), thus completing the proof of the statement.
\end{proof}

We point out that the algebra of smooth cylindrical functions on \(\B\) contains \(\B^*\). In particular, Lemma \ref{lem:cyl_generators_cotg} can be applied to every
(compatible) subalgebra of \(C^1(\B)\cap\LIP_b(\B)\) that contains all smooth cylindrical functions.
\subsubsection*{Pullback bundle}
Let \((\X,\sfd_\X,\mu_\X)\) and \((\Y,\sfd_\Y,\mu_\Y)\) be complete and separable metric spaces endowed with finite Borel
measures. Let \(\varphi\colon\X\to\Y\) be a Borel map satisfying \(\varphi_\#\mu_\X\leq C\mu_\Y\) for some constant \(C>0\). Let \(\B\)
be a separable Banach space and let \({\bf E}\in\mathscr D_{\mu_\Y}(\B)\). Following \cite[Definition C.1]{DMLP21}, we then
define the \textbf{pullback bundle} \(\varphi^*{\bf E}\in\mathscr D_{\mu_\X}(\B)\) of \(\bf E\) as
\begin{equation}\label{eq:def_pullback_bundle}
(\varphi^*{\bf E})(x)\coloneqq{\bf E}(\varphi(x))\quad\text{ for }\mu_\X\text{-a.e.\ }x\in\X.
\end{equation}
For any exponent \(q\in(1,\infty)\), the pullback operator that we considered in \eqref{eq:pullback_Leb-Boch} induces a pullback
operator \(\varphi^*\colon\Gamma_q({\bf E})\to\Gamma_q(\varphi^*{\bf E})\). Furthermore, if \(\mathcal V\) is a generating
vector subspace of \(\Gamma_q({\bf E})\), then
\begin{equation}\label{eq:gen_pullback}
\{\varphi^*v\;|\;v\in\mathcal V\}\quad\text{ generates }\Gamma_q(\varphi^*{\bf E}).
\end{equation}
For a proof of the above claim, see \cite[Theorem C.3]{DMLP21}, taking also \cite[Remark C.4]{DMLP21} into account.
\section{Main results}
First, we provide several characterisations of \(W^{1,p}(\B,\mu)\) on a weighted (reflexive) Banach space:
\begin{theorem}[Identification of the Sobolev space]\label{thm:main_Sob}
Let \((\B,\mu)\) be a weighted Banach space such that \(\B\) has the Radon--Nikod\'{y}m property and let \(p\in(1,\infty)\).
Let \(f\in L^p(\mu)\) be a given function. Then the following conditions are equivalent:
\begin{itemize}
\item[\(\rm a)\)] It holds that \(f\in W^{1,p}(\B,\mu)\).
\item[\(\rm b)\)] There exists a (necessarily unique) element \(L_f\in D_q(\div_\mu)^*\cong\Gamma_q(T_\mu\B)^*\) such that
\begin{equation}\label{eq:char_L_f_cl1}
\int L_f(v)\,\d\mu=-\int f\,\div_\mu(v)\,\d\mu\quad\text{ for every }v\in D_q(\div_\mu).
\end{equation}
\end{itemize}
If in addition the space \(\B\) is reflexive, then {\rm a)} and {\rm b)} are also equivalent to the following condition:
\begin{itemize}
\item[\(\rm c)\)] Given any compatible subalgebra \(\mathscr A\) of \(C^1(\B)\cap\LIP_b(\B)\), it holds that \({\rm WD}_\mu^p(f;\mathscr A)\neq\varnothing\),
where the family \({\rm WD}_\mu^p(f;\mathscr A)\) of all \textbf{\((\mathscr A,p)\)-weak differentials} of \(f\) is given by
\[
{\rm WD}_\mu^p(f;\mathscr A)\coloneqq\Big\{\omega\in L^p(\B,\mu;\B^*)\;\Big|\;f_n\rightharpoonup f\text{ and }\d f_n\rightharpoonup\omega\text{ weakly, for some }(f_n)_n\subseteq\mathscr A\Big\}.
\]
\end{itemize}
\end{theorem}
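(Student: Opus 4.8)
The plan is to establish a)$\,\Rightarrow\,$b)$\,\Rightarrow\,$a) for every space with the Radon--Nikod\'{y}m property, and then, assuming reflexivity, to close the remaining equivalence through a)$\,\Rightarrow\,$c)$\,\Rightarrow\,$b). For a)$\,\Rightarrow\,$b) I would first treat a function $f\in C^1(\B)\cap\LIP_b(\B)$, for which $\d f\in L^p(\B,\mu;\B^*)\subseteq L^p_{w^*}(\B,\mu;\B^*)$ is available and I set $L_f\coloneqq\d f_{|T_\mu\B}\in\Gamma_p((T_\mu\B)^*_{w^*})\cong\Gamma_q(T_\mu\B)^*$ via Definition \ref{def:restr_Fr_diff} and Theorem \ref{thm:dual_sect_space}. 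The defining property of $\div_\mu$ then gives \eqref{eq:char_L_f_cl1} at once, together with the bound $\|L_f\|_{\Gamma_q(T_\mu\B)^*}=\||L_f|_{(T_\mu\B)^*}\|_{L^p(\mu)}\le\||\d f|_{\B^*}\|_{L^p(\mu)}$. For a general $f\in W^{1,p}(\B,\mu)$ I would invoke density in energy of $C^1(\B)\cap\LIP_b(\B)$ to choose $f_n\to f$ in $L^p(\mu)$ with $|\d f_n|_{\B^*}\to|\D_\mu f|$ in $L^p(\mu)$; the functionals $L_{f_n}$ are then uniformly bounded in $\Gamma_q(T_\mu\B)^*$, so since the predual $\Gamma_q(T_\mu\B)$ is separable a subsequence converges weakly$^*$ to some $L_f$, and passing to the limit in \eqref{eq:char_L_f_cl1} (using $f_n\to f$ in $L^p(\mu)$ and $\div_\mu(v)\in L^q(\mu)$) yields b). Uniqueness is immediate: any solution of \eqref{eq:char_L_f_cl1} that vanishes on $D_q(\div_\mu)$ vanishes on its closure $\Gamma_q(T_\mu\B)$.

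The direction b)$\,\Rightarrow\,$a) is the heart of the matter and the point where the test-plan machinery is needed. To any $\ppi\in\Pi_q(\B,\mu)$ I would associate the induced vector field $v_\ppi\in D_q(\div_\mu)$ of Lemma \ref{lem:vf_induced_tp}, obtained at $\mu$-a.e.\ $x$ as the $\hat\ppi_x$-barycenter of $\Der$ rescaled by $\rho\coloneqq\frac{\d(\e_\#\hat\ppi)}{\d\mu}$, and whose divergence is (up to sign) the difference of the densities of the endpoint marginals $(\e_0)_\#\ppi$ and $(\e_1)_\#\ppi$. With this choice, \eqref{eq:char_L_f_cl1} tested against $v=v_\ppi$ reads
\[
\int(f(\gamma_1)-f(\gamma_0))\,\d\ppi=\int L_f(v_\ppi)\,\d\mu\le\int|L_f|_{(T_\mu\B)^*}\,|v_\ppi|_\B\,\d\mu .
\]
Using the barycenter bound $|v_\ppi(x)|_\B\le\rho(x)\int\|\Der\|_\B\,\d\hat\ppi_x$ and unwinding the disintegration $\hat\ppi=\int\hat\ppi_x\,\d(\e_\#\hat\ppi)(x)$ together with $\rho\,\d\mu=\d(\e_\#\hat\ppi)$ and $\e(\gamma,t)=\gamma_t$, the right-hand side collapses to $\int\!\!\!\int_0^1|L_f|_{(T_\mu\B)^*}(\gamma_t)\|\dot\gamma_t\|_\B\,\d t\,\d\ppi$. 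As $\ppi\in\Pi_q(\B,\mu)$ is arbitrary, this is exactly \eqref{eq:minimal_wug} with $G\coloneqq|L_f|_{(T_\mu\B)^*}\in L^p(\mu)$, whence $f\in W^{1,p}(\B,\mu)$ with $|\D_\mu f|\le|L_f|_{(T_\mu\B)^*}$.

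Assuming now that $\B$ is reflexive, for a)$\,\Rightarrow\,$c) I would fix a compatible subalgebra $\mathscr A$ of $C^1(\B)\cap\LIP_b(\B)$ and apply density in energy (which is available since $\mathscr A$ is a compatible subalgebra of $\LIP_b(\B)$) to obtain $f_n\in\mathscr A$ with $f_n\to f$ and $|\d f_n|_{\B^*}\to|\D_\mu f|$ in $L^p(\mu)$; in particular $(\d f_n)_n$ is bounded in $L^p(\B,\mu;\B^*)$, which is reflexive because $\B$ is, so a subsequence satisfies $\d f_{n_k}\rightharpoonup\omega$ weakly while $f_{n_k}\to f$, giving $\omega\in{\rm WD}_\mu^p(f;\mathscr A)\ne\varnothing$. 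For c)$\,\Rightarrow\,$b) I would fix one compatible subalgebra and an $\omega\in{\rm WD}_\mu^p(f;\mathscr A)$ with witnesses $f_n\rightharpoonup f$ and $\d f_n\rightharpoonup\omega$, and pass to the limit in the divergence identity $\int\d f_n(v)\,\d\mu=-\int f_n\,\div_\mu(v)\,\d\mu$. Here the Asplund identification $L^q(\B,\mu;\B)^*\cong L^p(\B,\mu;\B^*)$ turns $\d f_n\rightharpoonup\omega$ into $\int\langle\d f_n,v\rangle\,\d\mu\to\int\langle\omega,v\rangle\,\d\mu$ for every $v\in L^q(\B,\mu;\B)$, while $f_n\rightharpoonup f$ in $L^p(\mu)$ handles the right-hand side; hence $\int\omega(v)\,\d\mu=-\int f\,\div_\mu(v)\,\d\mu$ for all $v\in D_q(\div_\mu)$, and $L_f\coloneqq\omega_{|T_\mu\B}$ realises b).

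I expect the only substantial difficulty to be the direction b)$\,\Rightarrow\,$a): it rests on the construction of the induced vector field $v_\ppi$ together with the identification of its divergence (Lemma \ref{lem:vf_induced_tp}), and on the disintegration and barycenter bookkeeping that converts the constant-in-time dual pairing $\int L_f(v_\ppi)\,\d\mu$ into the time-integrated weak upper gradient inequality. By contrast, a)$\,\Rightarrow\,$b) and the two reflexive implications are soft, relying only on density in energy and on weak or weak$^*$ compactness.
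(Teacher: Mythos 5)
Your proposal is correct, and half of it coincides with the paper's own proof: the implication b) \(\Rightarrow\) a) via the induced vector field of Lemma \ref{lem:vf_induced_tp}, and a) \(\Rightarrow\) c) via density in energy plus reflexivity of \(L^p(\B,\mu;\B^*)\), are exactly the paper's arguments. The other two implications take genuinely different routes. For a) \(\Rightarrow\) b), the paper avoids compactness altogether: using the Leibniz rule \eqref{eq:Leibniz_div_mu} it defines \(T_v(h)\coloneqq-\int f\,\div_\mu(hv)\,\d\mu\), shows \(|T_v(h)|\le\|h\|_{L^1(\mu_v)}\) with \(\mu_v\coloneqq|\D_\mu f||v|_\B\mu\), and represents \(T_v\) by some \(\theta_v\in L^\infty(\mu_v)\), \(|\theta_v|\le 1\); this produces \(L_f(v)=\theta_v|\D_\mu f||v|_\B\) together with the \emph{pointwise} bound \(|L_f(v)|\le|\D_\mu f||v|_\B\), hence \(|L_f|_{(T_\mu\B)^*}\le|\D_\mu f|\) \(\mu\)-a.e., which is precisely what the companion Theorem \ref{thm:main_mwug} (identity \eqref{eq:char_L_f_cl2}) requires -- the paper proves the two theorems simultaneously. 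Your weak\(^*\)-compactness argument applied to \(L_{f_n}=\d_{|T_\mu\B}f_n\) is valid (though you should state explicitly that \(\Gamma_q(T_\mu\B)\) is separable, which follows from the separability of \(\B\) and of \(L^q(\mu)\)), but it directly delivers only a norm bound; that suffices for Theorem \ref{thm:main_Sob}, and if you wanted the pointwise bound you could recover it by testing the weak\(^*\) limit against \(\1_E v\) for Borel sets \(E\). Symmetrically, for condition c) the paper proves c) \(\Rightarrow\) a) directly: applying Mazur's lemma to \((f_n,\d f_n)\) in \(L^p(\mu)\times L^p(\B,\mu;\B^*)\) gives convex combinations \(g_n\in\mathscr A\) with \(g_n\to f\) and \(\d g_n\to\omega\) strongly, whence \(f\in W^{1,p}(\B,\mu)\) with the quantitative bound \(|\D_\mu f|\le|\omega|_{\B^*}\) needed for \eqref{eq:id_with_WD}. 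Your alternative c) \(\Rightarrow\) b) -- passing to the weak limit in \(\int\d f_n(v)\,\d\mu=-\int f_n\,\div_\mu(v)\,\d\mu\) and setting \(L_f\coloneqq\omega_{|T_\mu\B}\) -- is also correct and softer; note in passing that your appeal to the identification \(L^q(\B,\mu;\B)^*\cong L^p(\B,\mu;\B^*)\) is not really what is needed there: every \(v\in L^q(\B,\mu;\B)\) canonically induces a bounded functional on \(L^p(\B,\mu;\B^*)\), so weak convergence of \(\d f_n\) suffices to pass to the limit regardless of any duality identification. The trade-off is that your route recovers the Sobolev property and the upper-gradient bound only by composing with b) \(\Rightarrow\) a), whereas the paper's heavier constructions yield the pointwise identities of Theorem \ref{thm:main_mwug} as by-products.
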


At the same time, we can also obtain various characterisations of the minimal \(p\)-weak upper gradient \(|\D_\mu f|\) of any Sobolev function \(f\in W^{1,p}(\B,\mu)\):
\begin{theorem}[Identification of the minimal weak upper gradient]\label{thm:main_mwug}
Let \((\B,\mu)\) be a weighted Banach space such that \(\B\) has the Radon--Nikod\'{y}m property. Let \(p\in(1,\infty)\) be given. Then
\begin{equation}\label{eq:def_d_mu}
\d_\mu f\coloneqq L_f\in\Gamma_q(T_\mu\B)^*\quad\text{ for every }f\in W^{1,p}(\B,\mu)
\end{equation}
defines a linear map \(\d_\mu\colon W^{1,p}(\B,\mu)\to\Gamma_q(T_\mu\B)^*\), which we call the \textbf{\(p\)-weak differential}, with
\begin{equation}\label{eq:char_L_f_cl2}
|\D_\mu f|=|\d_\mu f|_{(T_\mu\B)^*}\quad\text{ for every }f\in W^{1,p}(\B,\mu).
\end{equation}
Moreover, we have that \(\d_\mu f=\d_{|T_\mu\B}f\) holds for every \(f\in C^1(\B)\cap\LIP_b(\B)\), thus in particular
\begin{equation}\label{eq:char_L_f_cl3}
|\D_\mu f|=\bigvee_{v\in D_q(\div_\mu)}\1_{\{v\neq 0\}}\frac{\d f(v)}{|v|_\B}\quad\text{ for every }f\in C^1(\B)\cap\LIP_b(\B).
\end{equation}
If in addition \(\B\) is reflexive, for any compatible subalgebra \(\mathscr A\) of \(C^1(\B)\cap\LIP_b(\B)\) it holds that
\begin{equation}\label{eq:id_with_WD}
|\D_\mu f|=|\omega|_{\B^*}\quad\text{ for every }f\in W^{1,p}(\B,\mu)\text{ and }\omega\in{\rm WD}_\mu^p(f;\mathscr A).
\end{equation}
\end{theorem}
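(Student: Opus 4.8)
The plan is to reduce everything to smooth functions and then transport the identity to arbitrary Sobolev functions by relaxation, treating the three displayed identities in the order \eqref{eq:char_L_f_cl2}, \eqref{eq:char_L_f_cl3}, \eqref{eq:id_with_WD}. Linearity of \(\d_\mu\) is immediate from the uniqueness of \(L_f\) in Theorem \ref{thm:main_Sob}(b) and the linearity in \(f\) of the defining relation \eqref{eq:char_L_f_cl1}. For \(f\in C^1(\B)\cap\LIP_b(\B)\), testing the definition of \(\div_\mu\) against \(f\) itself gives \(\int\d f(v)\,\d\mu=-\int f\,\div_\mu(v)\,\d\mu=\int L_f(v)\,\d\mu\) for every \(v\in D_q(\div_\mu)\); hence the functionals \(\d_{|T_\mu\B}f\) (Definition \ref{def:restr_Fr_diff}) and \(L_f\) agree on \(D_q(\div_\mu)\), which is dense in \(\Gamma_q(T_\mu\B)\), so they coincide in \(\Gamma_q(T_\mu\B)^*\cong\Gamma_p((T_\mu\B)_{w^*}^*)\); this is \(\d_\mu f=\d_{|T_\mu\B}f\). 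Granting the main identity \eqref{eq:char_L_f_cl2}, formula \eqref{eq:char_L_f_cl3} then follows because \(|\d_\mu f|_{(T_\mu\B)^*}\) is the pointwise norm of \(\d_{|T_\mu\B}f\), which is computed fiberwise as the essential supremum of \(\d f(v)/|v|_\B\) over the generating family \(D_q(\div_\mu)\) (Lemma \ref{lem:cyl_generators_cotg}), turning the seminorm \eqref{eq:ptwse_seminorm} into the right-hand side of \eqref{eq:char_L_f_cl3}.

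The heart of the matter is \eqref{eq:char_L_f_cl2} for \(f\in C^1(\B)\cap\LIP_b(\B)\), which I would prove by two inequalities. For ``\(\le\)'' I would use that the velocity of any \(q\)-test plan is tangent, i.e.\ \(\Der(\gamma,t)\in T_\mu\B(\gamma_t)\) for \(\hat\ppi\)-a.e.\ \((\gamma,t)\) and every \(\ppi\in\Pi_q(\B,\mu)\); granting this, along a test plan one has \((f\circ\gamma)'_t=\langle\d_{\gamma_t}f,\dot\gamma_t\rangle\le|\d_\mu f|_{(T_\mu\B)^*}(\gamma_t)\,\|\dot\gamma_t\|_\B\), so \(|\d_\mu f|_{(T_\mu\B)^*}\) is a \(p\)-weak upper gradient and therefore dominates the minimal one. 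For ``\(\ge\)'' I would take a master \(q\)-test plan \(\ppi\) and use \eqref{eq:equiv_master_tp}: since \(\gamma_t=x\) for \(\hat\ppi_x\)-a.e.\ \((\gamma,t)\), the essential supremum there reduces to \(\sup_w|\langle\d_x f,w\rangle|/\|w\|_\B\) over the velocities \(w\) in the support of \(\Der_\#\hat\ppi_x\), and once one knows that these velocities generate the fibre \(T_\mu\B(x)\) this supremum equals \(|\d_\mu f|_{(T_\mu\B)^*}(x)\). I expect this step — the identification of the divergence-defined bundle \(T_\mu\B\) with the closed span \(V_\sppi\B\) of a master plan's velocities — to be the main obstacle; it is exactly the bridge between divergence and test plans, hence is shared with Theorem \ref{thm:main_tg_bundle} and must be secured (at the level of the inclusions \(V_\sppi\B\preceq T_\mu\B\) for every plan and \(V_\sppi\B\succeq T_\mu\B\) for a master plan) before the present argument closes.

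To pass from the smooth case to an arbitrary \(f\in W^{1,p}(\B,\mu)\) I would invoke the density-in-energy theorem, choosing \(f_n\) in a compatible subalgebra of \(C^1(\B)\cap\LIP_b(\B)\) with \(f_n\to f\) and \(|\d f_n|_{\B^*}\to|\D_\mu f|\) strongly in \(L^p(\mu)\). By the smooth case \(|\d_\mu f_n|_{(T_\mu\B)^*}=|\D_\mu f_n|\le|\d f_n|_{\B^*}\); moreover \(L_{f_n}(v)=-\int f_n\,\div_\mu(v)\,\d\mu\to-\int f\,\div_\mu(v)\,\d\mu=L_f(v)\) for every \(v\in D_q(\div_\mu)\), and since \(\|L_{f_n}\|_{\Gamma_q(T_\mu\B)^*}=\||\D_\mu f_n|\|_{L^p(\mu)}\) stays bounded this yields \(L_{f_n}\to L_f\) weakly\(^*\). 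Weak\(^*\) lower semicontinuity of the dual norm then gives \(\||\d_\mu f|_{(T_\mu\B)^*}\|_{L^p(\mu)}\le\liminf_n\||\d f_n|_{\B^*}\|_{L^p(\mu)}=\||\D_\mu f|\|_{L^p(\mu)}\). For the reverse, the same tangency of velocities together with a chain rule \((f\circ\gamma)'_t=\langle\d_\mu f(\gamma_t),\dot\gamma_t\rangle\) (obtained by approximating \(f\) by the \(f_n\)) shows that \(|\d_\mu f|_{(T_\mu\B)^*}\) is itself a \(p\)-weak upper gradient of \(f\), whence \(|\D_\mu f|\le|\d_\mu f|_{(T_\mu\B)^*}\) pointwise. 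This pointwise inequality together with the \(L^p\)-norm inequality above forces equality \(\mu\)-a.e., which is \eqref{eq:char_L_f_cl2}.

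Finally, for \eqref{eq:id_with_WD} assume \(\B\) reflexive, so that \(L^p(\B,\mu;\B^*)\cong L^q(\B,\mu;\B)^*\) is reflexive and the restriction \(\pi_{T_\mu\B}\) of Remark \ref{rmk:equiv_dual_sect} is defined, bounded and weakly continuous; let \(\omega\in{\rm WD}_\mu^p(f;\mathscr A)\), say \(f_n\rightharpoonup f\) and \(\d f_n\rightharpoonup\omega\). Then \(\pi_{T_\mu\B}(\d f_n)=\d_\mu f_n=L_{f_n}\rightharpoonup\pi_{T_\mu\B}(\omega)\), while \(L_{f_n}\rightharpoonup L_f=\d_\mu f\) as in the previous paragraph (under reflexivity weak and weak\(^*\) limits coincide), so by uniqueness of the weak limit \(\pi_{T_\mu\B}(\omega)=\d_\mu f\). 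Hence, using \eqref{eq:ptwse_seminorm}, Remark \ref{rmk:equiv_dual_sect} and \eqref{eq:char_L_f_cl2},
\[
|\omega|_{\B^*}\ge[\omega]_{(T_\mu\B)^*}=|\pi_{T_\mu\B}(\omega)|_{(T_\mu\B)^*}=|\d_\mu f|_{(T_\mu\B)^*}=|\D_\mu f|\qquad\mu\text{-a.e.}
\]
The reverse inequality \(|\omega|_{\B^*}\le|\D_\mu f|\) — equivalently, that the weak differential is tangential — is the delicate point and is where reflexivity is genuinely used: I would extract from \(\d f_n\rightharpoonup\omega\), via Mazur's lemma, convex combinations \(g_k\) with \(g_k\to f\) and \(\d g_k\to\omega\) strongly (so that \(|\d g_k|_{\B^*}\to|\omega|_{\B^*}\) in \(L^p(\mu)\)), and combine the smooth bound \(|\D_\mu g_k|=|\d_\mu g_k|_{(T_\mu\B)^*}\le|\d g_k|_{\B^*}\) with lower semicontinuity of the minimal weak upper gradient and the minimality encoded in \eqref{eq:char_L_f_cl2} to pin \(|\omega|_{\B^*}\) to \(|\D_\mu f|\); closing this estimate is the second place where I expect the real work to lie.
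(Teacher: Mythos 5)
Your proof of the core identity \eqref{eq:char_L_f_cl2} is circular relative to the paper's architecture, and this is the main gap. Both of your inequalities for \(f\in C^1(\B)\cap\LIP_b(\B)\) --- the ``\(\le\)'' via tangency of test-plan velocities and the ``\(\ge\)'' via the master-plan description of the fibres --- are precisely parts i) and ii) of Theorem \ref{thm:main_tg_bundle}. But in the paper that theorem is proved \emph{after} Theorem \ref{thm:main_mwug} and uses \eqref{eq:char_L_f_cl2} verbatim (it appears in Step 1 of its proof), and it moreover assumes \(\B\) Asplund, whereas \eqref{eq:char_L_f_cl2} is asserted under the Radon--Nikod\'{y}m property alone; your extension to general \(f\in W^{1,p}(\B,\mu)\) has the same problem, since it invokes tangency again together with an unproved chain rule \((f\circ\gamma)'_t=\langle\d_\mu f(\gamma_t),\dot\gamma_t\rangle\) for Sobolev functions. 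You flag the obstacle but leave it open, so the proposal does not close. The device you are missing is Lemma \ref{lem:vf_induced_tp}: every \(\ppi\in\Pi_q(\B,\mu)\) induces a vector field \(v_\sppi\in D_q(\div_\mu)\) satisfying \eqref{eq:vf_induced_tp_cl1} and \(\div_\mu(v_\sppi)=\frac{\d(\e_0)_\#\ppi}{\d\mu}-\frac{\d(\e_1)_\#\ppi}{\d\mu}\); plugging \(v_\sppi\) into \eqref{eq:char_L_f_cl1} gives at once
\[
\int f(\gamma_1)-f(\gamma_0)\,\d\ppi(\gamma)\le\int\!\!\!\int_0^1|\d_\mu f|_{(T_\mu\B)^*}(\gamma_t)\,\|\dot\gamma_t\|_\B\,\d t\,\d\ppi(\gamma),
\]
i.e.\ \(|\D_\mu f|\le|\d_\mu f|_{(T_\mu\B)^*}\) for \emph{all} Sobolev \(f\), with no tangency statement whatsoever. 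The opposite pointwise bound is not obtained via master plans either: the paper constructs \(L_f\) from an energy-dense sequence \((f_n)_n\subseteq C^1(\B)\cap\LIP_b(\B)\), localises through the Leibniz rule \eqref{eq:Leibniz_div_mu} by considering \(h\mapsto-\int f\,\div_\mu(hv)\,\d\mu\), and uses \(L^1\)--\(L^\infty\) duality with respect to \(\mu_v\coloneqq|\D_\mu f||v|_\B\,\mu\) to produce \(L_f(v)\in L^1(\mu)\) with the pointwise estimate \(|L_f(v)|\le|\D_\mu f||v|_\B\). This elementary two-step scheme treats all \(f\in W^{1,p}(\B,\mu)\) simultaneously, needs only RNP, and avoids any detour through smooth functions, master plans, or Theorem \ref{thm:main_tg_bundle}.

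Concerning \eqref{eq:id_with_WD}, your first half is correct and is a genuine alternative to the paper's route: showing \(\pi_{T_\mu\B}(\omega)=\d_\mu f\) for every \(\omega\in{\rm WD}_\mu^p(f;\mathscr A)\) and concluding \(|\omega|_{\B^*}\ge[\omega]_{(T_\mu\B)^*}=|\D_\mu f|\) (the paper gets this same inequality via Mazur's lemma). But your sketch of the ``delicate'' reverse inequality cannot succeed as described: Mazur plus lower semicontinuity of minimal weak upper gradients yields \(|\D_\mu f|\le|\omega|_{\B^*}\) once more, i.e.\ the same direction again. What the paper actually does is prove the reverse inequality only for a \emph{particular} \(\omega\): take \((f_n)_n\subseteq\mathscr A\) with \(f_n\to f\) and \(|\d f_n|_{\B^*}\to|\D_\mu f|\) strongly, extract a weak limit \(\d f_n\rightharpoonup\omega\) using reflexivity, and observe that for this \(\omega\) weak lower semicontinuity gives \(|\omega|_{\B^*}\le|\D_\mu f|\). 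Your unease here is well founded: equality for an \emph{arbitrary} weak differential cannot be reached by these soft arguments, because \(\omega\) may carry mass transverse to \(T_\mu\B\). For instance, with \(\B=\R\), \(\mu=\delta_0\), \(\mathscr A=C^1(\R)\cap\LIP_b(\R)\), the constant sequence \(f_n\coloneqq f(0)+\arctan\) exhibits \(\omega=1\in{\rm WD}_\mu^p(f;\mathscr A)\) while \(|\D_\mu f|=0\). So the statement one can actually establish (and the one delivered by Step 4 of the paper's proof) is: \(|\D_\mu f|\le|\omega|_{\B^*}\) for every \(\omega\in{\rm WD}_\mu^p(f;\mathscr A)\), with equality attained by at least one \(\omega\); your identity \(\pi_{T_\mu\B}(\omega)=\d_\mu f\) is then the correct ``for every \(\omega\)'' substitute.
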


Theorems \ref{thm:main_Sob} and \ref{thm:main_mwug} will be proved in Section \ref{s:soboident}. Furthermore, we show that the \(\mu\)-tangent bundle \(T_\mu\B\)
can be alternatively defined in terms of the velocity of curves selected by \(q\)-test plans:
\begin{theorem}[Identification of the tangent bundle]\label{thm:main_tg_bundle}
Let \((\B,\mu)\) be a weighted Banach space such that \(\B\) is an Asplund space that has the Radon--Nikod\'{y}m property. Let \(q\in(1,\infty)\) be given. Then
the \(\mu\)-tangent bundle \(T_\mu\B\) can be equivalently characterised as follows:
\begin{itemize}
\item[\(\rm i)\)] \(T_\mu\B\) is the unique minimal element of \((\mathscr D_\mu(\B),\preceq)\) such that
\begin{equation}\label{eq:tg_bundle_span_tp_claim}
\dot\gamma_t\in T_\mu\B(\gamma_t)\quad\text{ for every }\ppi\in\Pi_q(\B,\mu)\text{ and }\hat\ppi\text{-a.e.\ }(\gamma,t)\in\mathfrak C(\B).
\end{equation}
\item[\(\rm ii)\)] If \(\ppi\) is a given master \(q\)-test plan on \((\B,\mu)\), then \(T_\mu\B(x)=\{0_\B\}\) for \(\mu\)-a.e.\ \(x\in\B\setminus{\rm S}_\sppi\) and
\[
T_\mu\B(x)={\rm cl}_\B\bigg({\rm span}\bigg\{v\in\B\;\bigg|\;\hat\ppi_x\Big(\Big\{(\gamma,t)\in\mathfrak C(\B)\,:\,\|\dot\gamma_t-v\|_\B<\varepsilon\Big\}\Big)>0\,\text{ for all }\varepsilon>0\bigg\}\bigg)
\]
for \(\e_\#\hat\ppi\)-a.e.\ \(x\in\B\), where \(\hat\ppi=\int\hat\ppi_x\,\d(\e_\#\hat\ppi)(x)\) denotes the disintegration of \(\hat\ppi\) along \(\e\).
\end{itemize}
\end{theorem}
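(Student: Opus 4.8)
Throughout, fix a compatible subalgebra \(\mathscr A\) of \(C^1(\B)\cap\LIP_b(\B)\) containing the smooth cylindrical functions (so that both Lemma \ref{lem:cyl_generators_cotg} and the realisation of arbitrary constant differentials below are available). The whole argument rests on one core fact, namely the inclusion \eqref{eq:tg_bundle_span_tp_claim} itself: for every \(\ppi\in\Pi_q(\B,\mu)\) one has \(\dot\gamma_t\in T_\mu\B(\gamma_t)\) for \(\hat\ppi\)-a.e.\ \((\gamma,t)\). To prove it I would start from the pointwise bound \eqref{eq:ptwse_mwug}, which for \(f\in C^1(\B)\cap\LIP_b(\B)\) reads \(|\langle\d_{\gamma_t}f,\dot\gamma_t\rangle|=|(f\circ\gamma)'_t|\le|\D_\mu f|(\gamma_t)\,\|\dot\gamma_t\|_\B\) \(\hat\ppi\)-a.e., and combine it with Theorem \ref{thm:main_mwug}, which for such \(f\) gives \(|\D_\mu f|(x)=\|(\d_x f)|_{T_\mu\B(x)}\|_{T_\mu\B(x)^*}\). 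Since \(\B\) is separable Asplund, its dual \(\B^*\) is separable; I fix a countable dense set \(\{\ell_m\}\subseteq\B^*\) and, for each \(m,k\), a cylindrical \(f_{m,k}\in\mathscr A\) of the form \(f_{m,k}(x)=\psi_{m,k}(\langle\ell_m,x\rangle)\) with \(\psi'_{m,k}\equiv 1\) on \([-k,k]\), so that \(\d_x f_{m,k}=\ell_m\) whenever \(|\langle\ell_m,x\rangle|\le k\). Intersecting the countably many full-measure sets produced by the bound above and letting \(k\to\infty\), I obtain that for \(\hat\ppi\)-a.e.\ \((\gamma,t)\),
\[
|\langle\ell_m,\dot\gamma_t\rangle|\le\|\ell_m|_{T_\mu\B(\gamma_t)}\|\,\|\dot\gamma_t\|_\B\quad\text{for all }m,
\]
and by density and continuity in \(\ell\) the same holds for every \(\ell\in\B^*\). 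Applying this to every \(\ell\) annihilating the closed subspace \(T_\mu\B(\gamma_t)\) forces \(\langle\ell,\dot\gamma_t\rangle=0\), whence \(\dot\gamma_t\in T_\mu\B(\gamma_t)\) by Hahn--Banach. Upgrading the \(f\)-by-\(f\) inequality to a single pointwise statement valid for a separating family of functionals is the delicate point, and I expect it to be the main obstacle; it is exactly here that separability of \(\B^*\) (i.e.\ the Asplund hypothesis) is indispensable.

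This establishes that \(T_\mu\B\) satisfies \eqref{eq:tg_bundle_span_tp_claim}, so item~i) reduces to minimality. Given any \({\bf F}\in\mathscr D_\mu(\B)\) that also satisfies \eqref{eq:tg_bundle_span_tp_claim}, I would fix a master \(q\)-test plan \(\ppi\) and use its characterisation \eqref{eq:equiv_master_tp}: for \(f\in\mathscr A\),
\[
|\d f_{|T_\mu\B}|_{(T_\mu\B)^*}(x)=|\D_\mu f|(x)=\underset{\hat\ppi_x\text{-a.e.}}{\rm ess\,sup}\,\frac{|\langle\d_x f,\dot\gamma_t\rangle|}{\|\dot\gamma_t\|_\B}\le\|(\d_x f)|_{{\bf F}(x)}\|_{{\bf F}(x)^*}=|\d f_{|{\bf F}}|_{{\bf F}^*}(x),
\]
the inequality holding because \({\bf F}\) satisfies \eqref{eq:tg_bundle_span_tp_claim} for this \(\ppi\), so \(\dot\gamma_t\in{\bf F}(x)\) for \(\hat\ppi_x\)-a.e.\ \((\gamma,t)\) and \(\e_\#\hat\ppi\)-a.e.\ \(x\). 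As \(\{\d f:f\in\mathscr A\}\) generates \(L^p(\B,\mu;\B^*)=L^p_{w^*}(\B,\mu;\B^*)\) by Lemma \ref{lem:cyl_generators_cotg} (using that \(\B\) is Asplund), Lemma \ref{lem:technical_mod} yields \(T_\mu\B\preceq{\bf F}\). Thus \(T_\mu\B\) is the least element satisfying \eqref{eq:tg_bundle_span_tp_claim}, hence its unique minimal element.

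For item~ii) I again fix a master \(q\)-test plan \(\ppi\). The equality \(T_\mu\B(x)=\{0_\B\}\) for \(\mu\)-a.e.\ \(x\in\B\setminus{\rm S}_\sppi\) follows from the master property \(|\D_\mu f|=0\) \(\mu\)-a.e.\ on \(\B\setminus{\rm S}_\sppi\): running the dense-functional argument of the first paragraph gives \(\|\ell_m|_{T_\mu\B(x)}\|=|\D_\mu f_{m,k}|(x)=0\) for all \(m\), so \(T_\mu\B(x)\subseteq\bigcap_m\ker\ell_m=\{0_\B\}\) there. The main formula is then obtained as a two-sided containment. The inclusion \(V_\sppi\B\preceq T_\mu\B\) is immediate from the core inclusion: disintegrating \(\hat\ppi=\int\hat\ppi_x\,\d(\e_\#\hat\ppi)(x)\), for \(\e_\#\hat\ppi\)-a.e.\ \(x\) the measure \(\Der_\#\hat\ppi_x\) is concentrated on the closed subspace \(T_\mu\B(x)\), hence its support \(S_\sppi\B(x)\subseteq T_\mu\B(x)\), and therefore \(V_\sppi\B(x)={\rm cl}_\B({\rm span}\,S_\sppi\B(x))\subseteq T_\mu\B(x)\); here \(V_\sppi\B\in\mathscr D_\mu(\B)\) by Lemma \ref{lem:char_master_tp_aux}. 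The reverse \(T_\mu\B\preceq V_\sppi\B\) is the minimality argument with \({\bf F}=V_\sppi\B\): since \(S_\sppi\B(x)\subseteq V_\sppi\B(x)\) by definition, \eqref{eq:equiv_master_tp} gives for every \(f\in\mathscr A\)
\[
|\d f_{|T_\mu\B}|_{(T_\mu\B)^*}(x)=\underset{\hat\ppi_x\text{-a.e.}}{\rm ess\,sup}\,\frac{|\langle\d_x f,\dot\gamma_t\rangle|}{\|\dot\gamma_t\|_\B}\le\|(\d_x f)|_{V_\sppi\B(x)}\|_{V_\sppi\B(x)^*}=|\d f_{|V_\sppi\B}|_{V_\sppi\B^*}(x),
\]
so Lemma \ref{lem:technical_mod} applies once more. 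Combining the two containments yields \(T_\mu\B=V_\sppi\B\), which is the asserted description of the tangent bundle.
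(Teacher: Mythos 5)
Your proof is correct, and its second half follows the same architecture as the paper's: minimality in item i) and both inclusions in item ii) are obtained by fixing a master \(q\)-test plan, feeding the characterisation \eqref{eq:equiv_master_tp} into the module-comparison Lemma \ref{lem:technical_mod} with the generating family \(\{\d f:f\in\mathscr A\}\) from Lemma \ref{lem:cyl_generators_cotg}, and using Lemma \ref{lem:char_master_tp_aux} for \(V_\sppi\B\). (Your direct proof of \(V_\sppi\B\preceq T_\mu\B\) -- a measure concentrated on a closed subspace has its support inside it -- is in fact cleaner than the paper's contradiction argument via Castaing selections.) Where you genuinely diverge is the central inclusion \eqref{eq:tg_bundle_span_tp_claim}. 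The paper proves it on the path space \((\mathfrak C(\B),\hat\ppi)\): it forms the rank-one bundle \({\bf E}_\sppi(\gamma,t)=\R\,\Der_\sppi(\gamma,t)\), uses \eqref{eq:ptwse_mwug} and Theorem \ref{thm:main_mwug} to get \(|(\e^*\d f)_{|{\bf E}_\sppi}|_{({\bf E}_\sppi)^*}\leq|(\e^*\d f)_{|\e^*T_\mu\B}|_{(\e^*T_\mu\B)^*}\) \(\hat\ppi\)-a.e.\ for every \(f\in C^1(\B)\cap\LIP_b(\B)\), and then applies Lemma \ref{lem:technical_mod} again, which requires knowing that \(\{\e^*\d f\}\) generates \(L^p(\mathfrak C(\B),\hat\ppi;\B^*)\) (Lemma \ref{lem:cyl_generators_cotg} together with the pullback-generation fact \eqref{eq:gen_pullback}); the Asplund hypothesis enters through the duality \(L^q(\cdot\,;\B)^*\cong L^p(\cdot\,;\B^*)\). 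You instead use that a separable Asplund space has separable dual, test \eqref{eq:ptwse_mwug} against countably many cylinder functions whose differentials realise, locally, a dense sequence of \(\B^*\), and finish with Hahn--Banach. Both routes rest on the same two inputs, namely \eqref{eq:ptwse_mwug} and the identity \(|\D_\mu f|=|\d_{|T_\mu\B}f|_{(T_\mu\B)^*}\) from Theorem \ref{thm:main_mwug}; yours is more elementary and makes the role of Asplundness transparent, avoiding the pullback-bundle machinery and \eqref{eq:gen_pullback} entirely, while the paper's argument reuses its module-theoretic lemma uniformly across all steps.

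Two small repairs. First, in the minimality step of item i) your inequality chain is established only for \(\e_\#\hat\ppi\)-a.e.\ \(x\), i.e.\ \(\mu\)-a.e.\ on \({\rm S}_\sppi\), whereas Lemma \ref{lem:technical_mod} needs the hypothesis \(\mu\)-a.e.\ on all of \(\B\); you must add that \(|\d_{|T_\mu\B}f|_{(T_\mu\B)^*}=|\D_\mu f|=0\) holds \(\mu\)-a.e.\ on \(\B\setminus{\rm S}_\sppi\) by the master-plan property -- the very fact you invoke later in item ii) -- so this is a one-line fix (the paper handles it by keeping the factor \(\1_{{\rm S}_\sppi}\) in its formula). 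Second, the indicator \(\1_{\{\Der\neq 0\}}\) of \eqref{eq:equiv_master_tp} should be kept in your ess-sup displays, since the quotient is undefined where \(\dot\gamma_t=0\); this changes nothing in the estimates.
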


See Section \ref{ss:id_tg_bundle} for the proof of Theorem \ref{thm:main_tg_bundle}.
Combining Theorems \ref{thm:main_Sob} and \ref{thm:main_mwug} with well-known facts in the Banach space theory, we obtain the following useful corollary:
\begin{corollary}\label{cor:refl_Sobolev}
Let \((\B,\mu)\) be a weighted Banach space and \(p\in(1,\infty)\). Then the following hold:
\begin{itemize}
\item[\(\rm i)\)] If \(\B\) is reflexive, then \(W^{1,p}(\B,\mu)\) is reflexive.
\item[\(\rm ii)\)] If the dual \(\B^*\) is uniformly convex, then \(W^{1,p}(\B,\mu)\) is uniformly convex.
\item[\(\rm iii)\)] If \(\B\) is a Hilbert space and \(p=2\), then \(W^{1,2}(\B,\mu)\) is a Hilbert space.
\end{itemize}
In particular, if the space \(\B\) is reflexive, then every compatible subalgebra \(\mathscr A\) of \(C^1(\B)\cap\LIP_b(\B)\) is strongly dense in \(W^{1,p}(\B,\mu)\),
and the \(p\)-weak differential operator \(\d_\mu\colon W^{1,p}(\B,\mu)\to\Gamma_q(T_\mu\B)^*\) is the unique linear and continuous extension of the map
\(\d_{|T_\mu\B}\colon\mathscr A\to\Gamma_q(T_\mu\B)^*\).
\end{corollary}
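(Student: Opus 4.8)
The plan is to realise \(W^{1,p}(\B,\mu)\) as an isometric copy of a closed subspace of a product space whose two factors manifestly enjoy the three properties in question, and then to invoke the stability of reflexivity, uniform convexity, and the Hilbert property under passage to closed subspaces and to finite \(\ell^p\)-products. Concretely, I would consider the map \(\iota\colon W^{1,p}(\B,\mu)\to L^p(\mu)\times\Gamma_q(T_\mu\B)^*\) given by \(\iota(f)\coloneqq(f,\d_\mu f)\), where the target carries the norm \(\|(g,\omega)\|^p\coloneqq\|g\|_{L^p(\mu)}^p+\|\omega\|_{\Gamma_q(T_\mu\B)^*}^p\). By Theorem \ref{thm:dual_sect_space} the factor \(\Gamma_q(T_\mu\B)^*\cong\Gamma_p((T_\mu\B)^*_{w^*})\) carries the norm \(\omega\mapsto\||\omega|_{(T_\mu\B)^*}\|_{L^p(\mu)}\), so that \(\|\d_\mu f\|_{\Gamma_q(T_\mu\B)^*}=\||\d_\mu f|_{(T_\mu\B)^*}\|_{L^p(\mu)}=\||\D_\mu f|\|_{L^p(\mu)}\) by \eqref{eq:char_L_f_cl2}. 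Hence \(\|\iota(f)\|=\|f\|_{W^{1,p}(\B,\mu)}\), i.e.\ \(\iota\) is a linear isometry, and since \(W^{1,p}(\B,\mu)\) is complete its image is a closed subspace of the product.

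I would then record the properties of the two factors. The space \(L^p(\mu)\) is reflexive and uniformly convex for \(p\in(1,\infty)\), and is a Hilbert space when \(p=2\). For the second factor, \(\Gamma_q(T_\mu\B)\) is by definition a closed subspace of \(L^q(\B,\mu;\B)\). If \(\B\) is reflexive then \(L^q(\B,\mu;\B)\) is reflexive, whence so are the closed subspace \(\Gamma_q(T_\mu\B)\) and therefore its dual \(\Gamma_q(T_\mu\B)^*\); if \(\B\) is a Hilbert space and \(q=2\), then \(L^2(\B,\mu;\B)\), its closed subspace \(\Gamma_2(T_\mu\B)\), and hence also \(\Gamma_2(T_\mu\B)^*\) are Hilbert spaces. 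Using that the finite \(\ell^p\)-product of reflexive (respectively, of Hilbert, when \(p=2\)) spaces retains that property, and that closed subspaces do too, the isometry \(\iota\) yields i) and iii).

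The delicate point is ii), because the dual of a subspace is a \emph{quotient} of the dual and uniform convexity is not stable under quotients. I would circumvent this by passing through uniform smoothness, which \emph{is} inherited by closed subspaces. Assuming \(\B^*\) uniformly convex, the Milman--Pettis theorem gives that \(\B^*\) is reflexive, hence \(\B\) is reflexive, in particular Asplund. Then \(L^q(\B,\mu;\B)^*\cong L^p(\B,\mu;\B^*)\), and the latter is uniformly convex since \(\B^*\) is; thus \(L^q(\B,\mu;\B)\) has uniformly convex dual, i.e.\ it is uniformly smooth. As uniform smoothness passes to the closed subspace \(\Gamma_q(T_\mu\B)\), its dual \(\Gamma_q(T_\mu\B)^*\) is uniformly convex. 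Since \(L^p(\mu)\) is uniformly convex, the finite \(\ell^p\)-product of uniformly convex spaces is uniformly convex for \(p\in(1,\infty)\), and closed subspaces preserve uniform convexity, the isometry \(\iota\) gives ii).

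For the concluding assertion, suppose \(\B\) is reflexive. Then \(W^{1,p}(\B,\mu)\) is reflexive by i), and the reflexivity of the Sobolev space forces every compatible subalgebra of \(\LIP_b(\B)\) to be strongly dense in it by \cite[Proposition 42]{ACM14}; this applies in particular to the given \(\mathscr A\subseteq C^1(\B)\cap\LIP_b(\B)\). The operator \(\d_\mu\) is linear and continuous, as \(\|\d_\mu f\|_{\Gamma_q(T_\mu\B)^*}=\||\D_\mu f|\|_{L^p(\mu)}\leq\|f\|_{W^{1,p}(\B,\mu)}\), and by Theorem \ref{thm:main_mwug} it agrees with \(\d_{|T_\mu\B}\) on \(C^1(\B)\cap\LIP_b(\B)\), hence on \(\mathscr A\). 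Two continuous linear maps coinciding on the dense subspace \(\mathscr A\) coincide everywhere, so \(\d_\mu\) is the unique linear continuous extension of \(\d_{|T_\mu\B}\colon\mathscr A\to\Gamma_q(T_\mu\B)^*\). The main obstacle is the uniform-convexity step ii), precisely the need to route around the failure of quotient-stability via the duality with uniform smoothness.
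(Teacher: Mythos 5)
Your argument is correct, and its backbone is the same as the paper's: the linear isometry \(f\mapsto(f,\d_\mu f)\) of \(W^{1,p}(\B,\mu)\) onto a closed subspace of the \(\ell^p\)-product of \(L^p(\mu)\) with the dual of the tangent sections (the paper phrases this with \(D_q(\div_\mu)^*\), which is isometrically \(\Gamma_q(T_\mu\B)^*\) since \(D_q(\div_\mu)\) is dense in \(\Gamma_q(T_\mu\B)\)), together with \eqref{eq:char_L_f_cl2}; the final density/uniqueness assertion is argued exactly as in the paper. Where you genuinely diverge is in how the second factor inherits the three properties. The paper works on the dual side: since \(\B\) is reflexive (in ii) via Milman--Pettis), \(L^q(\B,\mu;\B)^*\cong L^p(\B,\mu;\B^*)\) and Hahn--Banach exhibits \(D_q(\div_\mu)^*\) as an isometric quotient of \(L^p(\B,\mu;\B^*)\), after which reflexivity, uniform convexity and Hilbertianity are all quoted as quotient-stable. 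You work on the predual side: \(\Gamma_q(T_\mu\B)\) is a closed subspace of \(L^q(\B,\mu;\B)\), so reflexivity and Hilbertianity pass by subspace-plus-dual (which, pleasantly, makes i) and iii) independent of the Bochner duality identification), and uniform convexity passes through the chain ``dual uniformly convex \(\Rightarrow\) space uniformly smooth \(\Rightarrow\) closed subspace uniformly smooth \(\Rightarrow\) its dual uniformly convex''. Both routes are sound. One correction, though: the premise motivating your detour in ii) --- that uniform convexity is \emph{not} stable under quotients --- is false. Quotients of uniformly convex spaces are uniformly convex (a classical theorem of Day), and this follows from precisely the duality you invoke: \((X/M)^*\cong M^\perp\) is a closed subspace of the uniformly smooth \(X^*\), hence uniformly smooth, so \((X/M)^{**}\) is uniformly convex, and \(X/M\) coincides with its bidual by reflexivity. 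So the paper's quotient step needs no repair; your uniform-smoothness argument is a valid equivalent of it rather than a necessary workaround.
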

\begin{proof}
Endow \(L^p(\mu)\times D_q(\div_\mu)^*\) with the norm \(\|(f,L)\|\coloneqq(\|f\|_{L^p(\mu)}^p+\|L\|_{D_q(\div_\mu)^*}^p)^{1/p}\). Then
\begin{equation}\label{eq:reflex_Sob}
W^{1,p}(\B,\mu)\ni f\mapsto(f,L_f)\in L^p(\mu)\times D_q(\div_\mu)^*\quad\text{ is a linear isometry,}
\end{equation}
thanks to the linearity of the map \(f\mapsto L_f\) and to \eqref{eq:char_L_f_cl2}. Let us now distinguish the three cases:
\smallskip

\noindent\(\boldsymbol{\rm i)}\) By well-known stability properties of reflexivity, we have the following chain of implications:
\[\begin{split}
\B\text{ reflexive}&\quad\Longrightarrow\quad L^q(\B,\mu;\B)\text{ reflexive}\quad\Longrightarrow\quad L^p(\B,\mu;\B^*)\text{ reflexive}\\
&\quad\Longrightarrow\quad D_q(\div_\mu)^*\text{ reflexive}\quad\Longrightarrow\quad L^p(\mu)\times D_q(\div_\mu)^*\text{ reflexive}.
\end{split}\]
Indeed, the Lebesgue--Bochner space \(L^q(\B,\mu;\B)\) is reflexive if (and only if) \(\B\) is reflexive, we have that \(L^q(\B,\mu;\B)^*\cong L^p(\B,\mu;\B^*)\) when \(\B\) is reflexive,
and \(D_q(\div_\mu)^*\) is isometrically isomorphic to a quotient of \(L^p(\B,\mu;\B^*)\). Taking also \eqref{eq:reflex_Sob} into account, the implication in i) is then proved.
\smallskip

\noindent\(\boldsymbol{\rm ii)}\) Suppose \(\B^*\) is uniformly convex. Then \(L^p(\B,\mu;\B^*)\) is uniformly convex. Since uniformly convex spaces are reflexive, the previous discussion
ensures that \(D_q(\div_\mu)^*\) is isometrically isomorphic to a quotient of \(L^p(\B,\mu;\B^*)\). It follows that \(D_q(\div_\mu)^*\) is uniformly convex, so that
\(L^p(\mu)\times D_q(\div_\mu)^*\) is uniformly convex. Recalling \eqref{eq:reflex_Sob} again, we conclude that \(W^{1,p}(\B,\mu)\) is uniformly convex.
\smallskip

\noindent\(\boldsymbol{\rm iii)}\) Suppose \(\B\) is Hilbert and \(p=2\). Since \(D_2(\div_\mu)^*\) is a quotient of the Hilbert space \(L^2(\B,\mu;\B^*)\),
we deduce that \(D_2(\div_\mu)^*\) is Hilbert. Thanks to \eqref{eq:reflex_Sob}, it follows that \(W^{1,2}(\B,\mu)\) is Hilbert.
\smallskip

Finally, assume that \(\B\) is reflexive, so that \(W^{1,p}(\B,\mu)\) is reflexive by i) and thus every compatible subalgebra \(\mathscr A\) of \(C^1(\B)\cap\LIP_b(\B)\)
is strongly dense in \(W^{1,p}(\B,\mu)\). In particular, the linear continuous operator \(\d_{|T_\mu\B}\colon\mathscr A\to\Gamma_q(T_\mu\B)^*\) can be uniquely extended
to a linear continuous operator \(T\colon W^{1,p}(\B,\mu)\to\Gamma_q(T_\mu\B)^*\). Given that \(\d_\mu\) is a linear continuous extension of \(\d_{|T_\mu\B}\) by
Theorem \ref{thm:main_mwug}, we conclude that \(T=\d_\mu\). This proves the last part of the statement.
\end{proof}
\begin{remark}{\rm
The statements of Corollary \ref{cor:refl_Sobolev} were already known:\\
\(\boldsymbol{\rm i)}\) was proved in \cite[Corollary 5.3.11]{Sav19}.\\
\(\boldsymbol{\rm ii)}\) follows from the results of \cite{Sodini22}.\\
\(\boldsymbol{\rm iii)}\) was proved in \cite{DMGPS20} (as Hilbert spaces are \({\sf CAT}(0)\) spaces); see also \cite[Corollary 5.3.11]{Sav19}.
\fr}\end{remark}
\subsection{Proof of Theorems \ref{thm:main_Sob} and \ref{thm:main_mwug}}\label{s:soboident}
First, we show that each test plan on a weighted Banach space with the Radon--Nikod\'{y}m property induces a vector field.
The proof is inspired by \cite[Proposition 2.4]{DiMarino14} and \cite[Proposition 7.2.3]{DiMarinoPhD}.
\begin{lemma}[Vector field induced by a test plan]\label{lem:vf_induced_tp}
Let \((\B,\mu)\) be a weighted Banach space such that \(\B\) has the Radon--Nikod\'{y}m property. Let \(q\in(1,\infty)\) and \(\ppi\in\Pi_q(\B,\mu)\) be given. Let us define
\begin{equation}\label{eq:def_vf_induced_tp}
v_\sppi(x)\coloneqq\frac{\d(\e_\#\hat\ppi)}{\d\mu}(x)\int\dot\gamma_t\,\d\hat\ppi_x(\gamma,t)\in\B\quad\text{ for }\mu\text{-a.e.\ }x\in\B.
\end{equation}
Then it holds that \(v_\sppi\in L^q(\B,\mu;\B)\) and
\begin{equation}\label{eq:vf_induced_tp_cl1}
\int h|v_\sppi|_\B\,\d\mu\leq\int\!\!\!\int_0^1 h(\gamma_t)\|\dot\gamma_t\|_\B\,\d t\,\d\ppi(\gamma)\quad\text{ for every }h\colon\B\to[0,+\infty)\text{ Borel.}
\end{equation}
Moreover, it holds that \(v_\sppi\in D_q(\div_\mu)\) and
\begin{equation}\label{eq:vf_induced_tp_cl2}
\div_\mu(v_\sppi)=\frac{\d(\e_0)_\#\ppi}{\d\mu}-\frac{\d(\e_1)_\#\ppi}{\d\mu}.
\end{equation}
\end{lemma}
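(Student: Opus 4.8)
The plan is to verify the three claims in order, relying throughout on the disintegration \(\hat\ppi=\int\hat\ppi_x\,\d(\e_\#\hat\ppi)(x)\) and on the identity \(\frac{\d(\e_\#\hat\ppi)}{\d\mu}\,\d\mu=\d(\e_\#\hat\ppi)\) coming from \(\e_\#\hat\ppi\ll\mu\). Writing \(\rho\coloneqq\frac{\d(\e_\#\hat\ppi)}{\d\mu}\in L^\infty(\mu)\) (with \(0\le\rho\le\Comp(\ppi)\)), I would first record that \(\Der\in L^q(\mathfrak C(\B),\hat\ppi;\B)\) by the finite kinetic energy assumption (ii); disintegrating, this gives \(\Der\in L^1(\hat\ppi_x;\B)\) for \(\e_\#\hat\ppi\)-a.e.\ \(x\), so the Bochner integral \(\int\Der\,\d\hat\ppi_x\) exists for a.e.\ \(x\) and, using separability of \(\B\) together with the measurability of \(x\mapsto\hat\ppi_x\), defines a strongly measurable \(\B\)-valued map. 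Hence \(v_\sppi\) is well defined \(\mu\)-a.e.

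For the inequality \eqref{eq:vf_induced_tp_cl1} and the membership \(v_\sppi\in L^q(\B,\mu;\B)\), the key tool is Jensen's inequality for the probability measures \(\hat\ppi_x\). Since each \(\hat\ppi_x\) is concentrated on \(\e^{-1}(x)\), for \(\mu\)-a.e.\ \(x\) one has \(h(x)=h(\e(\gamma,t))\) \(\hat\ppi_x\)-a.e., while by convexity of the norm \(\|\int\Der\,\d\hat\ppi_x\|_\B\le\int\|\Der\|_\B\,\d\hat\ppi_x\). Multiplying by \(h\rho\), integrating in \(\d\mu\), using \(\rho\,\d\mu=\d(\e_\#\hat\ppi)\), pulling \(h(x)\) inside as \(h(\e)\), and re-assembling the disintegration yields \(\int h|v_\sppi|_\B\,\d\mu\le\int h(\e)\|\Der\|_\B\,\d\hat\ppi\), which unfolds to the right-hand side of \eqref{eq:vf_induced_tp_cl1} by the definition \(\hat\ppi=\ppi\otimes\mathcal L_1\). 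For the \(L^q\) bound I would instead apply Jensen to the convex map \(s\mapsto s^q\) to get \((\int\|\Der\|_\B\,\d\hat\ppi_x)^q\le\int\|\Der\|_\B^q\,\d\hat\ppi_x\), so that \(|v_\sppi|_\B^q\le\rho^q\int\|\Der\|_\B^q\,\d\hat\ppi_x\); bounding one factor \(\rho^{q-1}\le\Comp(\ppi)^{q-1}\) and reassembling the disintegration gives \(\int|v_\sppi|_\B^q\,\d\mu\le\Comp(\ppi)^{q-1}\int\|\Der\|_\B^q\,\d\hat\ppi<\infty\) by (ii).

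For the divergence identity \eqref{eq:vf_induced_tp_cl2}, fix \(f\in C^1(\B)\cap\LIP_b(\B)\) and compute \(\int\d f(v_\sppi)\,\d\mu\). Since \(\hat\ppi_x\) sits on \(\e^{-1}(x)\), on its support \(\d_xf=\d_{\gamma_t}f\), so \(\langle\d_xf,v_\sppi(x)\rangle=\rho(x)\int\langle\d_{\gamma_t}f,\dot\gamma_t\rangle\,\d\hat\ppi_x(\gamma,t)\); integrating in \(\mu\) and reassembling the disintegration as before gives \(\int\d f(v_\sppi)\,\d\mu=\int\int_0^1\langle\d_{\gamma_t}f,\dot\gamma_t\rangle\,\d t\,\d\ppi(\gamma)\). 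For \(\ppi\)-a.e.\ \(\gamma\) the curve is absolutely continuous and \(f\circ\gamma\) is absolutely continuous (as \(f\) is Lipschitz), with \((f\circ\gamma)'_t=\langle\d_{\gamma_t}f,\dot\gamma_t\rangle\) for a.e.\ \(t\) by the chain rule for Fr\'{e}chet differentials along differentiable curves; the fundamental theorem of calculus then turns the inner integral into \(f(\gamma_1)-f(\gamma_0)\). Recognising the resulting \(\int f(\gamma_1)-f(\gamma_0)\,\d\ppi\) as \(\int f\,\d(\e_1)_\#\ppi-\int f\,\d(\e_0)_\#\ppi\) and using \((\e_i)_\#\ppi\le\Comp(\ppi)\mu\) to write these through their bounded densities, one obtains exactly \(-\int f\big(\frac{\d(\e_0)_\#\ppi}{\d\mu}-\frac{\d(\e_1)_\#\ppi}{\d\mu}\big)\,\d\mu\); boundedness of the densities and finiteness of \(\mu\) place the candidate divergence in \(L^q(\mu)\), so \(v_\sppi\in D_q(\div_\mu)\) with the asserted divergence.

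The routine parts are the Jensen estimates and the manipulation of pushforwards; the main technical care goes into (a) the strong measurability and a.e.\ Bochner integrability of \(x\mapsto\int\Der\,\d\hat\ppi_x\), justified via separability of \(\B\) together with measurability of the disintegration, and (b) the chain-rule and absolute-continuity step for \(f\circ\gamma\), ensuring \((f\circ\gamma)'_t=\langle\d_{\gamma_t}f,\dot\gamma_t\rangle\) holds for \(\hat\ppi\)-a.e.\ \((\gamma,t)\) so that the fundamental theorem of calculus applies along \(\ppi\)-a.e.\ curve.
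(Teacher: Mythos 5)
Your proof is correct and follows essentially the same route as the paper's: disintegrate \(\hat\ppi\) along \(\e\), use convexity of the norm together with the fact that \(\hat\ppi_x\) is concentrated on \(\e^{-1}(x)\) to obtain \eqref{eq:vf_induced_tp_cl1}, and then the chain rule plus the fundamental theorem of calculus along \(\ppi\)-a.e.\ curve, followed by the pushforward identities, for \eqref{eq:vf_induced_tp_cl2}. The only (immaterial) difference is how \(L^q\)-membership is obtained: you apply Jensen's inequality to \(s\mapsto s^q\) together with the bound \(\frac{\d(\e_\#\hat\ppi)}{\d\mu}\leq{\rm Comp}(\ppi)\), whereas the paper deduces it directly from \eqref{eq:vf_induced_tp_cl1} via H\"{o}lder's inequality.
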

\begin{proof}
First of all, the map \(v_\sppi\colon\B\to\B\) is \(\mu\)-measurable thanks to the measurability of \(x\mapsto\hat\ppi_x\). Moreover, given any Borel function \(h\colon\B\to[0,+\infty)\), we have that
\[
\int h|v_\sppi|_\B\,\d\mu\leq\int h(x)\frac{\d(\e_\#\hat\ppi)}{\d\mu}(x)\int\|\dot\gamma_t\|_\B\,\d\hat\ppi_x(\gamma,t)\,\d\mu(x)
=\int\!\!\!\int_0^1 h(\gamma_t)\|\dot\gamma_t\|_\B\,\d t\,\d\ppi(\gamma),
\]
which gives \eqref{eq:vf_induced_tp_cl1}, thus in particular \(v_\sppi\in L^q(\B,\mu;\B)\) by H\"{o}lder's inequality. Finally, for any given function
\(f\in C^1(\B)\cap\LIP_b(\B)\) we can compute
\[\begin{split}
\int\d f(v_\sppi)\,\d\mu&=\int\!\!\!\int\langle\d_{\gamma_t}f,\dot\gamma_t\rangle\,\d\hat\ppi_x(\gamma,t)\,\d(\e_\#\hat\ppi)(x)
=\int\!\!\!\int_0^1(f\circ\gamma)'_t\,\d t\,\d\ppi(\gamma)\\
&=\int f(\gamma_1)-f(\gamma_0)\,\d\ppi(\gamma)=\int f\bigg(\frac{\d(\e_1)_\#\ppi}{\d\mu}-\frac{\d(\e_0)_\#\ppi}{\d\mu}\bigg)\,\d\mu,
\end{split}\]
which shows that \(v_\sppi\in D_q(\div_\mu)\) and that the identity in \eqref{eq:vf_induced_tp_cl2} holds. The proof is complete.
\end{proof}
\begin{proof}[Proof of Theorems \ref{thm:main_Sob} and \ref{thm:main_mwug}]
\ 
\smallskip

{\bf Step 1: proof of \({\rm a)}\Rightarrow{\rm b)}\) and \(|\d_\mu f|_{(T_\mu\B)^*}\leq|\D_\mu f|\).}\\
Assume that \(f\in W^{1,p}(\B,\mu)\). Pick any sequence \((f_n)_n\subseteq C^1(\B)\cap\LIP_b(\B)\) such that \(f_n\to f\) and \(|\d f_n|_{\B^*}\to|\D_\mu f|\) in \(L^p(\mu)\).
Then we have that
\begin{equation}\label{eq:char_Sob_L_f_aux1}
\bigg|\int f\,\div_\mu(v)\,\d\mu\bigg|=\lim_{n\to\infty}\bigg|\int f_n\,\div_\mu(v)\,\d\mu\bigg|
\leq\lim_{n\to\infty}\int|\d f_n|_{\B^*}|v|_\B\,\d\mu=\int|\D_\mu f||v|_\B\,\d\mu
\end{equation}
for every \(v\in D_q(\div_\mu)\). Letting \(\mu_v\coloneqq|\D_\mu f||v|_\B\mu\), we also define \(T_v\colon C^1(\B)\cap\LIP_b(\B)\to\R\) as
\[
T_v(h)\coloneqq-\int f\,\div_\mu(hv)\,\d\mu\quad\text{ for every }h\in C^1(\B)\cap\LIP_b(\B).
\]
By \eqref{eq:Leibniz_div_mu}, the map \(T_v\) is linear. Also, \eqref{eq:char_Sob_L_f_aux1} gives \(|T_v(h)|\leq\|h\|_{L^1(\mu_v)}\) for all
\(h\in C^1(\B)\cap\LIP_b(\B)\). Since \(C^1(\B)\cap\LIP_b(\B)\) is dense in \(L^1(\mu_v)\) and the dual of \(L^1(\mu_v)\) is \(L^\infty(\mu_v)\),
there exists a unique function \(\theta_v\in L^\infty(\mu_v)\) such that \(|\theta_v|\leq 1\) and \(T_v(h)=\int h\,\theta_v\,\d\mu_v\)
for every \(h\in C^1(\B)\cap\LIP_b(\B)\). Therefore, letting \(L_f(v)\coloneqq\theta_v|\D_\mu f||v|_\B\in L^1(\mu)\), we conclude that \(|L_f(v)|\leq|\D_\mu f||v|_\B\) and
\begin{equation}\label{eq:char_Lf}
\int h\,L_f(v)\,\d\mu=-\int f\,\div_\mu(hv)\,\d\mu\quad\text{ for every }v\in D_q(\div_\mu)\text{ and }h\in C^1(\B)\cap\LIP_b(\B).
\end{equation}
Choosing \(h\coloneqq 1\), we obtain that \(L_f\) satisfies \eqref{eq:char_L_f_cl1}, while the weak\(^*\) density of \(C^1(\B)\cap\LIP_b(\B)\)
in \(L^\infty(\mu)\) ensures that \(L_f\) is uniquely determined. Since \(L_f\) is linear (thanks to \eqref{eq:char_Lf} and to the linearity of \(\div_\mu\)),
we conclude that \(L_f\in D_q(\div_\mu)^*\) and \(|\d_\mu f|_{(T_\mu\B)^*}=|L_f|_{(T_\mu\B)^*}\leq|\D_\mu f|\), proving the inequality \(\geq\) in \eqref{eq:char_L_f_cl2}.
The linearity of \(\d_\mu\colon W^{1,p}(\B,\mu)\to D_q(\div_\mu)^*\) follows from \eqref{eq:Leibniz_div_mu}.
\smallskip

{\bf Step 2: proof of \({\rm b)}\Rightarrow{\rm a)}\) and \(|\D_\mu f|\leq|\d_\mu f|_{(T_\mu\B)^*}\).}\\
Assume that there exists an element \(L_f\in D_q(\div_\mu)^*\) satisfying \eqref{eq:char_L_f_cl1}. Fix any \(\ppi\in\Pi_q(\B,\mu)\)
and consider the vector field \(v_\sppi\in L^q(\B,\mu;\B)\) induced by \(\ppi\) as in \eqref{eq:def_vf_induced_tp}. Then we can estimate
\[\begin{split}
\int f(\gamma_1)-f(\gamma_0)\,\d\ppi(\gamma)&=\int f\,\d(\e_1)_\#\ppi-\int f\,\d(\e_0)_\#\ppi\overset{\eqref{eq:vf_induced_tp_cl2}}=
-\int f\,\div_\mu(v_\sppi)\,\d\mu=\int L_f(v_\sppi)\,\d\mu\\
&\leq\int|L_f|_{(T_\mu\B)^*}|v_\sppi|_\B\,\d\mu\overset{\eqref{eq:vf_induced_tp_cl1}}\leq\int\!\!\!\int_0^1|L_f|_{(T_\mu\B)^*}(\gamma_t)\|\dot\gamma_t\|_\B\,\d t\,\d\ppi(\gamma).
\end{split}\]
Hence, \(f\in W^{1,p}(\B,\mu)\) and \(|\D_\mu f|\leq|L_f|_{(T_\mu\B)^*}=|\d_\mu f|_{(T_\mu\B)^*}\), thus the proof of \eqref{eq:char_L_f_cl2} is complete.
\smallskip

{\bf Step 3: proof of \eqref{eq:char_L_f_cl3}.}\\
Given that \(\int h\,L_f(v)\,\d\mu=-\int f\,\div_\mu(hv)\,\d\mu=\int h\,\d f(v)\,\d\mu\) holds for every \(f,h\in C^1(\B)\cap\LIP_b(\B)\) and \(v\in D_q(\div_\mu)\), we have that
\(L_f(v)=\d f(v)\) for every \(f\in C^1(\B)\cap\LIP_b(\B)\) and \(v\in D_q(\div_\mu)\), so that \(\d_\mu f=L_f=\d_{|T_\mu\B}f\) for every \(f\in C^1(\B)\cap\LIP_b(\B)\).
In particular, by taking also the identity \eqref{eq:char_L_f_cl2} into account, for any \(f\in C^1(\B)\cap\LIP_b(\B)\) we obtain that
\[
|\D_\mu f|=|\d_{|T_\mu\B}f|_{(T_\mu\B)^*}=\bigvee_{v\in D_q(\div_\mu)}\1_{\{v\neq 0\}}\frac{\d f(v)}{|v|_\B},
\]
thus proving the validity of \eqref{eq:char_L_f_cl3}.
\smallskip

{\bf Step 4: proof of \({\rm a)}\Leftrightarrow{\rm c)}\) and \eqref{eq:id_with_WD}.}\\
Let us now assume in addition that \(\B\) is reflexive. Let \(f\in W^{1,p}(\B,\mu)\) be given. Pick a sequence \((f_n)_n\subseteq\mathscr A\) such that \(f_n\to f\) and
\(|\d f_n|_{\B^*}\to|\D_\mu f|\) in \(L^p(\mu)\). Since \(L^p(\B,\mu;\B^*)\) is reflexive, up to a non-relabelled subsequence we have that \(\d f_n\rightharpoonup\omega\)
weakly in \(L^p(\B,\mu;\B^*)\) for some \(\omega\in L^p(\B,\mu;\B^*)\), so that \(\omega\in{\rm WD}^p_\mu(f;\mathscr A)\). Notice also that we have that \(|\omega|_{\B^*}\leq|\D_\mu f|\).
Conversely, let \(f\in L^p(\mu)\) with \({\rm WD}^p_\mu(f;\mathscr A)\neq\varnothing\) be given. Take any \(\omega\in{\rm WD}^p_\mu(f;\mathscr A)\) and \((f_n)_n\subseteq\mathscr A\)
such that \(f_n\rightharpoonup f\) weakly in \(L^p(\mu)\) and \(\d f_n\rightharpoonup\omega\) weakly in \(L^p(\B,\mu;\B^*)\). Thanks to Mazur's lemma, we can find a sequence
\((g_n)_n\) of convex combinations of \((f_n)_n\) so that \(g_n\to f\) strongly in \(L^p(\mu)\) and \(\d g_n\to\omega\) strongly in \(L^p(\B,\mu;\B^*)\). In particular, it holds
that \(|\d g_n|_{\B^*}\to|\omega|_{\B^*}\) strongly in \(L^p(\mu)\), which implies that \(f\in W^{1,p}(\B,\mu)\) and \(|\D_\mu f|\leq|\omega|_{\B^*}\). All in all,
the property \eqref{eq:id_with_WD} is proved.
\end{proof}

Notice that the implication \({\rm WD}_\mu^p(f;\mathscr A)\neq\varnothing\Rightarrow f\in W^{1,p}(\B,\mu)\) holds for every \(\B\) separable.
\subsection{Proof of Theorem \ref{thm:main_tg_bundle}}\label{ss:id_tg_bundle}
Before passing to the verification of Theorem \ref{thm:main_tg_bundle}, we prove an auxiliary result:
\begin{lemma}\label{lem:char_master_tp_aux}
Let \((\B,\mu)\) be a weighted Banach space such that \(\B\) has the Radon--Nikod\'{y}m property. Let \(q\in(1,\infty)\) and \(\ppi\in\Pi_q(\B,\mu)\) be given.
Let us define the multivalued mapping \(V_\sppi\B\colon\B\twoheadrightarrow\B\) as \(V_\sppi\B(x)\coloneqq{\rm cl}_\B({\rm span}\,S_\sppi\B(x))\) for \(\mu\)-a.e.\ \(x\in\B\), where we set
\begin{equation}\label{eq:char_master_tp_aux}
S_\sppi\B(x)\coloneqq\left\{\begin{array}{ll}
{\rm spt}(\Der_\#\hat\ppi_x)\\
\{0_\B\}
\end{array}\quad\begin{array}{ll}
\text{ for }\e_\#\hat\ppi\text{-a.e.\ }x\in\B,\\
\text{ for }\mu\text{-a.e.\ }x\in\B\setminus{\rm S}_\sppi.
\end{array}\right.
\end{equation}
Then it holds that \(S_\sppi\B\colon\B\twoheadrightarrow\B\) is weakly measurable and \(V_\sppi\B\in\mathscr D_\mu(\B)\).
\end{lemma}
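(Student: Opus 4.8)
The plan is to verify the two assertions in turn, reducing the measurability of the support-valued map $S_\sppi\B$ to the measurability of the disintegration $x\mapsto\hat\ppi_x$, and then bootstrapping to the (closed) span via the Castaing representation of Proposition \ref{prop:Castaing}.

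First I would establish the weak measurability of $S_\sppi\B$. Since $S_\sppi\B(x)$ is either the support of a Borel probability measure on the separable space $\B$ or the singleton $\{0_\B\}$, it is a closed non-empty subset of $\B$, so by Proposition \ref{prop:Castaing} it suffices to show that $\{x\mid S_\sppi\B(x)\cap U\neq\varnothing\}$ is measurable for every open $U\subseteq\B$. On $\B\setminus{\rm S}_\sppi$ the fiber is $\{0_\B\}$, so this set is either all of $\B\setminus{\rm S}_\sppi$ or empty according to whether $0_\B\in U$, and both are measurable. On ${\rm S}_\sppi$, where $\mu$ and $\e_\#\hat\ppi$ are mutually absolutely continuous, I would invoke the basic correspondence that, for a Borel probability measure $\nu$ on the separable space $\B$, one has ${\rm spt}(\nu)\cap U\neq\varnothing$ if and only if $\nu(U)>0$; here the separability of $\B$ guarantees that the complement of the support is $\nu$-null, which is precisely what yields the implication $\nu(U)>0\Rightarrow{\rm spt}(\nu)\cap U\neq\varnothing$. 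Applying this with $\nu=\Der_\#\hat\ppi_x$ gives $S_\sppi\B(x)\cap U\neq\varnothing\iff\hat\ppi_x(\Der^{-1}(U))>0$. Since $\Der$ is Borel and $U$ is open, $\Der^{-1}(U)$ is Borel, and by the defining measurability of the disintegration the map $x\mapsto\hat\ppi_x(\Der^{-1}(U))$ is Borel; hence the required set is measurable. This settles the weak measurability of $S_\sppi\B$.

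Next I would pass from $S_\sppi\B$ to $V_\sppi\B$. By the weak measurability just proved and Proposition \ref{prop:Castaing}, there is a sequence of measurable maps $(v_n)_n$ with $S_\sppi\B(x)={\rm cl}_\B(\{v_n(x):n\in\N\})$ for every $x$. Enumerating the countable family $(w_m)_m$ of all finite $\mathbb Q$-linear combinations of the $v_n$, each $w_m\colon\B\to\B$ is again measurable, and by continuity of the linear operations one checks ${\rm cl}_\B(\{w_m(x):m\in\N\})={\rm cl}_\B({\rm span}\{v_n(x):n\in\N\})={\rm cl}_\B({\rm span}\,S_\sppi\B(x))=V_\sppi\B(x)$. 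Thus $(w_m)_m$ is a countable Castaing representation of $V_\sppi\B$ by measurable selections, so a second application of Proposition \ref{prop:Castaing} gives its weak measurability. Since by construction each fiber $V_\sppi\B(x)$ is a closed linear subspace of $\B$, we conclude that $V_\sppi\B\in\mathscr D_\mu(\B)$.

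The routine points are the set-theoretic identities for spans and closures, namely that the closed span is unchanged both upon replacing a set by a countable dense subset of it and upon restricting to rational coefficients; these I would dispatch quickly. The only genuinely delicate step is the first one: correctly relating ${\rm spt}(\Der_\#\hat\ppi_x)\cap U\neq\varnothing$ to the positivity of $\hat\ppi_x(\Der^{-1}(U))$ and verifying that the resulting dependence on $x$ is measurable, where one must carefully combine the separability of $\B$ with the measurability of $x\mapsto\hat\ppi_x$ furnished by the disintegration theorem and the Borel regularity of $\Der$.
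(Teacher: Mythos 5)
Your proposal is correct and follows essentially the same route as the paper: both reduce the weak measurability of \(S_\sppi\B\) to the identity \(\{x\,:\,S_\sppi\B(x)\cap U\neq\varnothing\}=\{x\,:\,\hat\ppi_x(\Der^{-1}(U))>0\}\) for open \(U\) (using the measurability of the disintegration), then use Proposition \ref{prop:Castaing} to extract Borel selections and pass to rational linear combinations to get a Castaing representation of \(V_\sppi\B\). The only cosmetic slip is that reducing weak measurability to checking \(\{x\,:\,S_\sppi\B(x)\cap U\neq\varnothing\}\in\Sigma\) for all open \(U\) is just the definition of weak measurability, not an appeal to Proposition \ref{prop:Castaing}; otherwise your extra care (the explicit case split on \(\B\setminus{\rm S}_\sppi\) and the justification of the support--positivity equivalence via separability) only fills in details the paper leaves implicit.
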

\begin{proof}
First, notice that \(V_\sppi\B(x)\) is a closed vector subspace of \(\B\) for \(\mu\)-a.e.\ \(x\in\B\). Moreover,
\[
\{x\in\B\;|\;S_\sppi\B(x)\cap U\neq\varnothing\}=\{x\in\B\;|\;\hat\ppi_x(\Der^{-1}(U))>0\}\quad\text{ for every }U\subseteq\B\text{ open,}
\]
thus the measurability of \(x\mapsto\hat\ppi_x\) ensures that the multivalued mapping \(S_\sppi\B\) is weakly measurable,
whence it follows that \({\rm cl}_\B(S_\sppi\B)\) is weakly measurable. Thanks to Proposition \ref{prop:Castaing},
we can find a sequence \((v_k)_{k\in\N}\) of Borel maps \(v_k\colon\B\to\B\) such that
\({\rm cl}_\B(\{v_k(x)\,:\,k\in\N\})={\rm cl}_\B(S_\sppi\B(x))\) holds for \(\mu\)-a.e.\ \(x\in\B\). Since for \(\mu\)-a.e.\ \(x\in\B\) we have that
\[
V_\sppi\B(x)={\rm cl}_\B\bigg(\bigg\{\sum_{i=1}^n q_i\,v_{k_i}(x)\;\bigg|\;n\in\N,\,q_1,\ldots,q_n\in\mathbb Q,
\,k_1,\ldots,k_n\in\N\bigg\}\bigg),
\]
we deduce from Proposition \ref{prop:Castaing} that \(V_\sppi\B\) is weakly measurable, thus \(V_\sppi\B\in\mathscr D_\mu(\B)\).
\end{proof}
\begin{proof}[Proof of Theorem \ref{thm:main_tg_bundle}]
\ 
\smallskip

{\bf Step 1: proof of Theorem \ref{thm:main_tg_bundle} i).}\\
Given any \(\ppi\in\Pi_q(\B,\mu)\), we denote by \(\Der_\sppi\in L^q(\mathfrak C(\B),\hat\ppi;\B)\) the equivalence
class of the mapping \(\Der\colon\mathfrak C(\B)\to\B\). We define \({\bf E}_\sppi\in\mathscr D_{\hat\sppi}(\B)\) as
\({\bf E}_\sppi(\gamma,t)\coloneqq\R\,{\rm Der}_\sppi(\gamma,t)\) for \(\hat\ppi\)-a.e.\ \((\gamma,t)\in\mathfrak C(\B)\). Since
\(\B\) is Asplund, we have that \(L^p_{w^*}(\B,\mu;\B^*)\cong L^p(\B,\mu;\B^*)\). For any \(f\in\mathscr A\coloneqq C^1(\B)\cap\LIP_b(\B)\),
\[\begin{split}
|(\e^*\d f)_{|{\bf E}_\sppi}|_{({\bf E}_\sppi)^*}(\gamma,t)
&=\1_{\{\Der_\sppi\neq 0\}}(\gamma,t)\frac{|(\e^*\d f)(\Der_\sppi)(\gamma,t)|}{|\Der_\sppi|_\B(\gamma,t)}
=\1_{\{\Der_\sppi\neq 0\}}(\gamma,t)\frac{|(f\circ\gamma)'_t|}{\|\dot\gamma_t\|_\B}\\
&\leq|\D_\mu f|(\gamma_t)=(|\d_\mu f|_{(T_\mu\B)^*}\circ\e)(\gamma,t)=|(\e^*\d f)_{|\e^*T_\mu \B}|_{(\e^*T_\mu\B)^*}(\gamma,t)
\end{split}\]
for \(\hat\ppi\)-a.e.\ \((\gamma,t)\in\mathfrak C(\B)\). Since \(\{\e^*\d f\,:\,f\in\mathscr A\}\) generates
\(L^p(\mathfrak C(\B),\hat\ppi;\B^*)\) thanks to Lemma \ref{lem:cyl_generators_cotg} and \eqref{eq:gen_pullback},
by using Lemma \ref{lem:technical_mod} we deduce that \({\bf E}_\sppi\preceq\e^*T_\mu\B\), whence
the property \eqref{eq:tg_bundle_span_tp_claim} follows.

We now prove that \(T_\mu\B\) is the minimal element of \((\mathscr D_\mu(\B),\preceq)\) with this property. Let \({\bf E}\in\mathscr D_\mu(\B)\)
be such that \(\Der_\sppi\in\Gamma_q(\e^*{\bf E})\) for every \(\ppi\in\Pi_q(\B,\mu)\). Fix a master \(q\)-test plan \(\ppi\in\Pi_q(\B,\mu)\). Then
\[\begin{split}
|\d_{|T_\mu\B}f|_{(T_\mu\B)^*}(x)&=|\D_\mu f|(x)\overset{\eqref{eq:equiv_master_tp}}=
\1_{{\rm S}_\sppi}(x)\underset{\hat\sppi_x\text{-a.e.\ }(\gamma,t)}{\rm ess\,sup}\1_{\{\Der\neq 0\}}(\gamma,t)\frac{|(f\circ\gamma)'_t|}{\|\dot\gamma_t\|_\B}\\
&=\1_{{\rm S}_\sppi}(x)\underset{\hat\sppi_x\text{-a.e.\ }(\gamma,t)}{\rm ess\,sup}\1_{\{\Der\neq 0\}}(\gamma,t)\frac{|(\e^*\d f)(\Der_\sppi)|(\gamma,t)}{|\Der_\sppi|_\B(\gamma,t)}\\
&\leq\1_{{\rm S}_\sppi}(x)\underset{\hat\sppi_x\text{-a.e.\ }(\gamma,t)}{\rm ess\,sup}|(\e^*\d f)_{|\e^*{\bf E}}|_{(\e^*{\bf E})^*}(\gamma,t)\leq|\d_{|{\bf E}}f|_{{\bf E}^*}(x)
\end{split}\]
holds for \(\mu\)-a.e.\ \(x\in\B\), for every given function \(f\in\mathscr A\). Since \(\{\d f\,:\,f\in\mathscr A\}\) generates \(L^p(\B,\mu;\B^*)\),
we finally conclude that \(T_\mu\B\preceq{\bf E}\) thanks to Lemma \ref{lem:technical_mod}. The validity of Theorem \ref{thm:main_tg_bundle} i) follows.

De facto, the above proof shows that for any master \(q\)-test plan \(\ppi\) on \((\B,\mu)\), the bundle \(T_\mu\B\) is the unique minimal element of \((\mathscr D_\mu(\B),\preceq)\) such that \(\dot\gamma_t\in T_\mu\B(\gamma_t)\) holds for
\(\hat\ppi\)-a.e.\ \((\gamma,t)\in\mathfrak C(\B)\).
\smallskip

{\bf Step 2: proof of Theorem \ref{thm:main_tg_bundle} ii).}\\
Let \(V_\sppi\B\) be as in Lemma \ref{lem:char_master_tp_aux}. Then to prove Theorem \ref{thm:main_tg_bundle} ii) amounts to showing that
\[
V_\sppi\B=T_\mu\B\quad\text{ for every master \(q\)-test plan }\ppi\text{ on }(\B,\mu).
\]
First, let us prove that \(V_\sppi\B\preceq T_\mu\B\). Let \(S_\sppi\B\) be as in Lemma \ref{lem:char_master_tp_aux}. By Proposition \ref{prop:Castaing},
we can find \((v_k)_{k\in\N}\subseteq\Gamma_q(V_\sppi\B)\) such that \({\rm cl}_\B(\{v_k(x)\,:\,k\in\N\})={\rm cl}_\B(S_\sppi\B(x))\) for \(\mu\)-a.e.\ \(x\in\B\).
To prove that \(V_\sppi\B\preceq T_\mu\B\), it suffices to show that \(v_k\in\Gamma_q(T_\mu\B)\) for every \(k\in\N\). We argue by contradiction: suppose there exist
\(k_0\in\N\), a Borel set \(E\subseteq\B\), and \(\delta>0\) such that \(\e_\#\hat\ppi(B)>0\) and \(\|v_{k_0}(x)-T_\mu\B(x)\|_\B\geq\delta\)
for \(\e_\#\hat\ppi\)-a.e.\ \(x\in E\). On the other, by the definition of \(S_\sppi\B\) we have
\[
\hat\ppi_x\Big(\Big\{(\gamma,t)\in\mathfrak C(\B)\;\Big|\;\|\dot\gamma_t-v_{k_0}(x)\|_\B<\delta\Big\}\Big)>0\quad\text{ for }\e_\#\hat\ppi\text{-a.e.\ }x\in E.
\]
By taking the property \eqref{eq:tg_bundle_span_tp_claim} into account, it thus follows that
\[
\e_\#\hat\ppi\Big(\Big\{x\in E\;\Big|\;\|v_{k_0}(x)-T_\mu\B(x)\|_\B<\delta\Big\}\Big)
\geq\hat\ppi\Big(\Big\{(\gamma,t)\in\e^{-1}(E)\;\Big|\;\|\dot\gamma_t-v_{k_0}(\gamma_t)\|_\B<\delta\Big\}\Big)>0.
\]
This leads to a contradiction with our choice of \(k_0\), \(E\), \(\delta\). Therefore, we deduce that \(V_\sppi\B\preceq T_\mu\B\).

We now pass to the verification of \(T_\mu\B\preceq V_\sppi\B\). For \(\e_\#\hat\ppi\)-a.e.\ \(x\in\B\), it holds that
\[
\dot\gamma_t=\Der(\gamma,t)\in{\rm spt}(\Der_\#\hat\ppi_x)=S_\sppi\B(\gamma_t)\subseteq V_\sppi\B(\gamma_t)\quad\text{ for }\hat\ppi_x\text{-a.e.\ }(\gamma,t)\in\mathfrak C(\B).
\]
Therefore, we have that \(\dot\gamma_t\in V_\sppi\B(\gamma_t)\) for \(\hat\ppi\)-a.e.\ \((\gamma,t)\in\mathfrak C(\B)\), so that \(T_\mu\B\preceq V_\sppi\B\)
thanks to the last paragraph of Step 1 of this proof.
\end{proof}
\section{Consistency with the metric vector calculus}
The aim of this conclusive section is to check the consistency of the vector calculus we develop in this paper
with the differential structure for metric measure spaces introduced by Gigli in \cite{Gigli14}.
\medskip

Let \((\B,\mu)\) be a weighted Banach space, \(q\in(1,\infty)\), and \({\bf E}\in\mathscr D_\mu(\B)\). Then the \(q\)-section space \(\Gamma_q({\bf E})\)
is a (complete) \textbf{\(L^q(\mu)\)-normed \(L^\infty(\mu)\)-module}, in the sense of \cite[Definition 1.2.10]{Gigli14}. Its dual \(\Gamma_q({\bf E})^*\cong\Gamma_p({\bf E}_{w^*}^*)\)
is an \(L^q(\mu)\)-normed \(L^\infty(\mu)\)-module, which coincides with the dual \(\Gamma_q({\bf E})\) in the sense of \cite[Definition 1.2.6 and Proposition 1.2.14]{Gigli14}.
Notice that the notion of `generating vector subspace' of \(\Gamma_q({\bf E})\) or \(\Gamma_p({\bf E}_{w^*}^*)\) we introduced in Definition \ref{def:gen_mod} is consistent
with the one of \cite[Definition 1.4.2]{Gigli14}. We refer to \cite{LPV22} for a more detailed discussion on these topics.
\medskip

In \cite{Gigli14}, the language of \(L^p(\mu)\)-normed \(L^\infty(\mu)\)-modules was used to develop a vector calculus for arbitrary metric measure spaces.
In this regard, an object playing a fundamental role is the \textbf{cotangent module}, introduced in \cite[Definition 2.2.1]{Gigli14} (see also \cite[Theorem/Definition 2.8]{Gigli17}
for another axiomatisation and \cite[Theorem 3.2]{GigPas20} for the case \(p\neq 2\)). The cotangent module is canonically associated with an \textbf{abstract differential}
operator. In the next result, we show the consistency of our machinery with Gigli's notions of cotangent module and abstract differential.
\begin{theorem}[Cotangent module]\label{thm:cotg_mod}
Let \((\B,\mu)\) be a weighted Banach space such that \(\B\) is an Asplund space having the Radon--Nikod\'{y}m property. Let \(p\in(1,\infty)\).
Then \(\Gamma_q(T_\mu\B)^*\) is isomorphic to the cotangent module of \((\B,\mu)\) and the associated differential is \(\d_\mu\colon W^{1,p}(\B,\mu)\to\Gamma_q(T_\mu\B)^*\).
\end{theorem}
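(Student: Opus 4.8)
The plan is to invoke the universal property that characterises Gigli's cotangent module and to verify that the couple $(\Gamma_q(T_\mu\B)^*,\d_\mu)$ satisfies its two defining axioms. Recall (from \cite[Definition 2.2.1]{Gigli14}, together with the axiomatisation in \cite[Theorem/Definition 2.8]{Gigli17} and \cite[Theorem 3.2]{GigPas20} for the case $p\neq 2$) that the cotangent module of $(\B,\mu)$ is the couple $(M,\d)$ — unique up to a unique module isomorphism intertwining the differentials — consisting of an $L^p(\mu)$-normed $L^\infty(\mu)$-module $M$ and a linear map $\d\colon W^{1,p}(\B,\mu)\to M$ such that (a) $|\d f|_M=|\D_\mu f|$ holds $\mu$-a.e.\ for every $f\in W^{1,p}(\B,\mu)$, and (b) the set $\{\d f\,:\,f\in W^{1,p}(\B,\mu)\}$ generates $M$ in the sense of Definition \ref{def:gen_mod}. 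Thus it suffices to check that $\Gamma_q(T_\mu\B)^*$ together with the $p$-weak differential $\d_\mu$ of \eqref{eq:def_d_mu} verifies (a) and (b); the desired isomorphism, sending the abstract differential to $\d_\mu$, is then produced by the universal property.

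First I would note that $\Gamma_q(T_\mu\B)^*$ is indeed an $L^p(\mu)$-normed $L^\infty(\mu)$-module, since $\Gamma_q(T_\mu\B)^*\cong\Gamma_p((T_\mu\B)_{w^*}^*)$ by Theorem \ref{thm:dual_sect_space} (using that $\B$ is separable), and that $\d_\mu$ is linear by Theorem \ref{thm:main_mwug}. Axiom (a) is precisely the identity \eqref{eq:char_L_f_cl2}, namely $|\D_\mu f|=|\d_\mu f|_{(T_\mu\B)^*}$ for every $f\in W^{1,p}(\B,\mu)$, which has already been established and therefore requires no further argument.

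The key point is axiom (b), which is where the Asplund hypothesis enters. By Theorem \ref{thm:main_mwug} one has $\d_\mu f=\d_{|T_\mu\B}f$ for every $f\in C^1(\B)\cap\LIP_b(\B)$. Choosing any compatible subalgebra $\mathscr A$ of $C^1(\B)\cap\LIP_b(\B)$ containing $\B^*$ (for instance one containing all smooth cylindrical functions), the second part of Lemma \ref{lem:cyl_generators_cotg} — valid precisely because $\B$ is Asplund — guarantees that $\{\d_{|T_\mu\B}f\,:\,f\in\mathscr A\}$ generates $\Gamma_p((T_\mu\B)_{w^*}^*)\cong\Gamma_q(T_\mu\B)^*$. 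Since $\mathscr A\subseteq C^1(\B)\cap\LIP_b(\B)\subseteq W^{1,p}(\B,\mu)$ and $\d_\mu$ restricts to $\d_{|T_\mu\B}$ on $\mathscr A$, the \emph{a priori} larger set $\{\d_\mu f\,:\,f\in W^{1,p}(\B,\mu)\}$ generates $\Gamma_q(T_\mu\B)^*$ as well, establishing (b).

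Having verified (a) and (b), the universal property of the cotangent module yields a unique isomorphism of $L^p(\mu)$-normed $L^\infty(\mu)$-modules between Gigli's cotangent module and $\Gamma_q(T_\mu\B)^*$ that carries the abstract differential to $\d_\mu$, which is exactly the assertion. The main obstacle is the generation property (b): the norm identity (a) comes for free from Theorem \ref{thm:main_mwug}, whereas the generation argument genuinely relies on the Asplund assumption, both to identify $\Gamma_q(T_\mu\B)^*$ with $\Gamma_p((T_\mu\B)_{w^*}^*)$ in a manner compatible with the restriction of Fréchet differentials and to make Lemma \ref{lem:cyl_generators_cotg} applicable. One should only be careful to observe that the span in (b) needs to run over differentials of functions in $C^1(\B)\cap\LIP_b(\B)$ rather than over all of $W^{1,p}(\B,\mu)$, so that no strong density of $\mathscr A$ in $W^{1,p}(\B,\mu)$ — which would require reflexivity via Corollary \ref{cor:refl_Sobolev} — is needed at this point.
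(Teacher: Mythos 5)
Your proposal is correct and follows essentially the same route as the paper's own proof: the norm identity \eqref{eq:char_L_f_cl2} gives axiom (a), Lemma \ref{lem:cyl_generators_cotg} (where the Asplund hypothesis enters) combined with $\d_\mu f=\d_{|T_\mu\B}f$ for $f\in C^1(\B)\cap\LIP_b(\B)$ gives the generation axiom (b) a fortiori for $\{\d_\mu f\,:\,f\in W^{1,p}(\B,\mu)\}$, and the conclusion follows from the uniqueness statement of \cite[Theorem 3.2]{GigPas20}. Your write-up is merely more explicit about the two defining axioms and about why reflexivity (strong density of $\mathscr A$ in $W^{1,p}(\B,\mu)$) is not needed, which the paper leaves implicit.
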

\begin{proof}
First, \(\{\d_{|T_\mu\B}f\,:\,f\in C^1(\B)\cap\LIP_b(\B)\}\) generates \(\Gamma_q(T_\mu\B)^*\) by Lemma \ref{lem:cyl_generators_cotg}, thus a fortiori
\(\{\d_\mu f\,:\,f\in W^{1,p}(\B,\mu)\}\) generates \(\Gamma_q(T_\mu\B)^*\). Also, \(|\d_\mu f|_{(T_\mu\B)^*}=|\D_\mu f|\) for every \(f\in W^{1,p}(\B,\mu)\)
by \eqref{eq:char_L_f_cl2}. The cotangent module and the asbtract differential are uniquely determined (up to a unique isomorphism)
by this property (see \cite[Theorem 3.2]{GigPas20}), thus the statement is proved.
\end{proof}

Under the assumptions of Theorem \ref{thm:cotg_mod}, the \textbf{tangent module} of \((\B,\mu)\) (which was defined in
\cite[Definition 2.3.1]{Gigli14} as the dual of its cotangent module) can be different from \(\Gamma_q(T_\mu\B)\). Indeed, the \(q\)-section space
\(\Gamma_q(T_\mu\B)\) is the \emph{predual} of the cotangent module instead. However:
\begin{corollary}
Let \((\B,\mu)\) be a weighted Banach space with \(\B\) reflexive and \(p\in(1,\infty)\). Then the cotangent module of \((\B,\mu)\) is reflexive
and \(\Gamma_q(T_\mu\B)\) is isomorphic to the tangent module of \((\B,\mu)\).
\end{corollary}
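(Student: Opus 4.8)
The plan is to reduce all of the corollary's assertions to a single fact: the Banach space $\Gamma_q(T_\mu\B)$ is reflexive. First I would observe that, since $\B$ is reflexive, the Lebesgue--Bochner space $L^q(\B,\mu;\B)$ is reflexive as well (recall from the preliminaries that $L^q(\X,\mu;\B)$ is reflexive if and only if $\B$ is). By Definition \ref{def:tg_bundle}, the $q$-section space $\Gamma_q(T_\mu\B)={\rm cl}_{L^q(\B,\mu;\B)}(D_q(\div_\mu))$ is a closed vector subspace of $L^q(\B,\mu;\B)$, and closed subspaces of reflexive Banach spaces are reflexive. Hence $\Gamma_q(T_\mu\B)$ is reflexive.

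Next I would pass to the duals. By Theorem \ref{thm:cotg_mod}, the cotangent module of $(\B,\mu)$ is isomorphic to $\Gamma_q(T_\mu\B)^*$; since the dual of a reflexive Banach space is reflexive, the cotangent module is reflexive, which is the first assertion. For the second, recall from the discussion preceding the statement that the tangent module is by definition the (module) dual of the cotangent module, and that for section spaces the module-theoretic dual coincides with the Banach-space dual (as noted in the excerpt, $\Gamma_q({\bf E})^*\cong\Gamma_p({\bf E}_{w^*}^*)$ is simultaneously the Banach dual and the Gigli dual). Consequently the tangent module is canonically identified with $\Gamma_q(T_\mu\B)^{**}$. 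Because $\Gamma_q(T_\mu\B)$ is reflexive, the canonical embedding $J\colon\Gamma_q(T_\mu\B)\hookrightarrow\Gamma_q(T_\mu\B)^{**}$ is an isometric isomorphism, so $\Gamma_q(T_\mu\B)$ is isomorphic to the tangent module.

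The only point requiring care — and the main (mild) obstacle — is to confirm that the canonical bidual embedding is not merely a Banach-space isomorphism but an isomorphism of $L^\infty(\mu)$-modules that preserves the pointwise norm, since the tangent module carries this extra structure. This is routine: for $v\in\Gamma_q(T_\mu\B)$, $f\in L^\infty(\mu)$ and $\omega\in\Gamma_q(T_\mu\B)^*$ one has $J(fv)(\omega)=\omega(fv)=f\,\omega(v)=(f\,J(v))(\omega)$ by the $L^\infty(\mu)$-linearity of $\omega$, so $J$ is $L^\infty(\mu)$-linear; and the pointwise-norm identity underlying Theorem \ref{thm:dual_sect_space} propagates through the two successive dualities, so $J$ also preserves pointwise norms. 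Thus $J$ is the desired module isomorphism, which completes the proof.
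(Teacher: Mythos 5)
Your proof is correct and follows essentially the same route as the paper: reflexivity of $L^q(\B,\mu;\B)$ passes to the closed subspace $\Gamma_q(T_\mu\B)$, hence to its dual (the cotangent module), and the tangent module $\Gamma_q(T_\mu\B)^{**}$ is then identified with $\Gamma_q(T_\mu\B)$ via the canonical embedding. Your extra verification that the bidual embedding is an $L^\infty(\mu)$-linear, pointwise-norm-preserving isomorphism is a welcome elaboration of a point the paper leaves implicit, but it does not change the argument.
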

\begin{proof}
Since \(L^q(\B,\mu;\B)\) is reflexive, its subspace \(\Gamma_q(T_\mu\B)\) is reflexive, so also the cotangent module \(\Gamma_q(T_\mu\B)^*\)
is reflexive. In particular, the tangent module \(\Gamma_q(T_\mu\B)^{**}\) is isomorphic to \(\Gamma_q(T_\mu\B)\).
\end{proof}

We point out that -- as far as we know -- the reflexivity of the cotangent module might not follow directly from that of \(W^{1,p}(\B,\mu)\). It is known that
the reflexivity of the cotangent module implies that of the Sobolev space \cite[Proposition 2.2.10]{Gigli14}, but whether the converse implication holds
is still an open problem, cf.\ with \cite[Remark 2.2.11]{Gigli14}. Finally, another consistency check:
\begin{remark}[Velocity of a test plan]\label{rmk:vel_tp}{\rm
Let \((\B,\mu)\) be a weighted Banach space with \(\B\) reflexive and let \(q\in(1,\infty)\). Given any \(q\)-test plan \(\ppi\in\Pi_q(\B,\mu)\), one can consider
the \textbf{velocity} \(\ppi'\in\Gamma_q(\e^*T_\mu\B)\) of \(\ppi\) in the sense of \cite[Theorem 1.21]{Pasqualetto22} (see \cite[Theorem 2.3.18]{Gigli14} for
the original definition, under extra assumptions). Letting \(\Der_\sppi\) be the \(\hat\ppi\)-a.e.\ equivalence class of \(\Der\colon\mathfrak C(\B)\to\B\), we claim that
\begin{equation}\label{eq:char_Der_ppi}
\Der_\sppi=\ppi'\quad\text{ for every }\ppi\in\Pi_q(\B,\mu).
\end{equation}
Indeed, Step 1 of the proof of Theorem \ref{thm:main_tg_bundle} shows that \(\Der_\sppi\in\Gamma_q(\e^*T_\mu\B)\), and an application of the dominated convergence theorem
ensures that for every \(f\in W^{1,p}(\B,\mu)\) it holds that
\[
\lim_{h\to 0}\bigg\|\frac{f\circ\e_{t+h}-f\circ\e_t}{h}-(\e^*\d f)(\Der_\sppi)(\cdot,t)\bigg\|_{L^1(\sppi)}=0\quad\text{ for }\mathcal L_1\text{-a.e.\ }t\in[0,1],
\]
whence the claimed identity \eqref{eq:char_Der_ppi} follows by the uniqueness part of \cite[Theorem 1.21]{Pasqualetto22}.
\fr}\end{remark}
\small
\def\cprime{$'$} \def\cprime{$'$}

\end{document}